\definecolor{MyDarkBlue}{rgb}{0,0.08,0.50}
\definecolor{BrickRed}{rgb}{0.65,0.08,0}
\newtheorem{Theorem}{Theorem}
\newtheorem{Assumption}{Assumption}
\newtheorem{Lemma}{Lemma}[section]
\newtheorem{lemma}{Lemma}[section]
\newtheorem{Proposition}[Lemma]{Proposition}
\newtheorem{remark}[Lemma]{Remark}
\newcommand\bigCI{\mathop{\underline{\raisebox{0pt}[0pt][1pt]{$\;||\;$}}}}
\newcommand{\s}{\quad}
\newcommand{\R}{\mathbb{R}}
\newcommand{\non}{\nonumber}
\newcommand{\beq}{\begin{eqnarray*}}
\newcommand{\eeq}{\end{eqnarray*}}
\newcommand{\beqn}{\begin{eqnarray}}
\newcommand{\eeqn}{\end{eqnarray}}
\newcommand{\bt}{\begin{Theorem}}
\newcommand{\et}{\end{Theorem}}
\newcommand{\bas}{\begin{Assumption}}
\newcommand{\eas}{\end{Assumption}}
\newcommand{\be}{\begin{equation}}
\newcommand{\ee}{\end{equation}}
\newcommand{\cls}{\mathcal{S}}
\newcommand{\ci}{\perp\!\!\!\perp}
\newcommand{\mb}[1]{{\mathbf #1}}
\DeclareMathOperator*{\esssup}{ess\,sup}
\numberwithin{equation}{section}
\definecolor{darkgreen}{rgb}{0,.4,0}
\definecolor{darkagenta}{rgb}{.5,0,.5}
\definecolor{darkred}{rgb}{1,0,0}
\definecolor{darkblue}{rgb}{0,0,.4}
\begin{document}

\tikzset{every node/.style={auto}}
 \tikzset{every state/.style={rectangle, minimum size=0pt, draw=none, font=\normalsize}}
 \tikzset{bend angle=20}

	\author{Abhishek Pal Majumder
	\thanks{University of Reading}
	}

\title{Long time behavior of a stochastically modulated infinite server queue}
\maketitle




\begin{abstract}We consider an infinite server queue where the arrival and the service rates are both modulated by a stochastic environment governed by an $S$-valued stochastic process $X$ that is ergodic with a limiting measure $\pi\in \mathcal{P}(S)$. Under certain conditions when $X$ is semi-Markovian and satisfies the renewal regenerative property, long-term behavior of the total counts of people in the queue (denoted by $Y:=(Y_{t}:t\ge 0)$) becomes explicit and the limiting measure of $Y$ can be described through a well-studied affine stochastic recurrence equation (SRE) $X\stackrel{d}{=}CX+D,\,\, X\ci (C, D)$. We propose a sampling scheme from that limiting measure with explicit convergence diagnostics. Additionally, one example is presented where the stochastic environment makes the system transient, in absence of a `no-feedback' assumption.
\end{abstract}




\section{Introduction} Infinite server queues form an important building block of queueing systems (chapter $6$ of \cite{robert2013stochastic}) and stochastic models at large. In this model, customers arrive at a system equipped with an unlimited number of service stations, via a time-homogeneous Poisson process, and the service times follow an exponential distribution. Variations of this model are applied across a wide range of fields, including biological systems \cite{anderson2016functional}, \cite{Jansen2016ldp}, \cite{Palomo2023flatten}; communication systems \cite{Malhotra2001traffic}; healthcare management \cite{worthington2020infinite}; transportations \cite{baykal2009modeling2}, \cite{baykal2009modeling}, and service system management and decision support \cite{Sonenberg2024prison}.

In all of the formulations mentioned above, it is natural to consider both the arrival and service rates to be non-stationary, driven by a common background stochastic process \( X = (X_s: s \geq 0) \). The process $X$ governs the switching between different regimes or environments, with states taking values in \( S \), a set that is countable and may be infinite. Let \(\lambda(\cdot), \mu(\cdot): S \to \mathbb{R}_{\geq 0}\) be two non-negative functions. Consider an infinite server queue where customers arrive according to a Poisson process with a stochastic time-dependent intensity \((\lambda(X_s): s \geq 0)\). If $T$ is the instant when a customer enters and is instantaneously assigned to a service station, her service time will be the first event of another Poisson process (that is independent of others) with non-homogeneous intensity \((\mu(X_s):s\ge T)\) starting from $T.$ The  dynamics of the total number of customers in the system at \( t \ge 0\) is denoted by the process \( Y := (Y_t: t \geq 0)\), taking values in \(\mathbb{N}\) for each $t$. If \(\lambda\) and \(\mu\) are constants rather than functions, using birth-death process formulation it can be shown that:
\[
\mathcal{L}(Y_t) \to \text{Poisson}\left(\frac{\lambda}{\mu}\right) \quad \text{as} \quad t \to \infty.
\]
 Abusing the notations by $\text{Bin}(n,p), \text{Poi}(\lambda)$ we denote a Binomial and a Poisson random variable with parameters $(n,p)\in \mathbb{N}\times [0,1]$ and $\lambda\in\R_{\ge 0}$ respectively as well as their laws,  depending on the context. For any three random variables $X,Y,Z$ the notation $X\ci Y \mid Z$ implies that conditioned on $Z,$ $X$ is independent of $Y.$

\vspace{0.3 cm}
We assume the background process $X$ and $Y$ jointly satisfy the following property, which is crucial for the ergodicity of the joint process $(X, Y).$ 
\bas\label{A0} There is no-feedback of $Y$ on the marginal dynamics of $X,$ i.e,  $$X_{t}\ci Y_{s} \mid X_{s}\s\text{for any }\s t>s>0.$$
\eas 

Suppose $N_{1}(\cdot)$ and $\{N_{2,i}(\cdot): i\in\mathbb{N}\}$ are classes of independent unit rate Poisson processes that are independent of everything else. Denote the filtration by $\mathcal{F}^{X}_{t}:=\sigma\{X_{s}:0\le s\le t\}$ of the background stochastic environment. Using Proposition 6.2 of \cite{robert2013stochastic}, it follows that conditioned on $\mathcal{F}^{X}_{t},$ $Y_{t}$ has the same distribution as the unique solution of the following stochastic differential equation
\[dY_{t}=N_{1}\Big(\lambda(X_{t})dt\Big)- \sum_{i=1}^{Y_{t-}}N_{2,i}\Big(\mu(X_{t})dt\Big),\]
or equivalently the following stochastic integral equation
\[Y_{t}=Y_{0}+N_{1}\Big(\int_{0}^{t}\lambda(X_{s})ds\Big)- \sum_{i=1}^{\infty}\int_{0}^{t}1_{\{i\le Y_{s-}\}}N_{2,i}\Big(\mu(X_{s})ds\Big).\]

The stochastic integral equation for $Y_{t}$ above, can be expressed further as the distributional solution of 
\beqn
Y_{t}&\stackrel{d}{=}&Y_{0}+ N_{1}\Big(\int_{0}^{t}\lambda(X_{s})ds\Big)-N_{2}\Big(\int_{0}^{t}Y_{s-}\mu(X_{s})ds\Big),\s\text{for each}\,\, t>0,\label{inftysq}
\eeqn
where $Y_{0}$ denotes the initial number of people in the system at $t=0,$ and $N_{2}$ is a unit rate Poisson process.

 We consider an example here. The arrival rate $\lambda(\cdot)=1,$ and the service rate $\mu(X_{s})=X_{s},$ where the  background environment $X=(X_{s}:s\ge 0)$ is driven by a $S:=\{0,1\}$-valued continuous-time Markov chain with switching rates are given by $Q=\begin{pmatrix} -\lambda_{01} & \lambda_{01}\\ \lambda_{10} & -\lambda_{10}
\end{pmatrix}$ with $\lambda_{10}=1000\lambda_{01},\,\,\&\,\, \lambda_{10}=1.$ Figure \ref{ex1} exhibits one sample path of $Y.$ 

\begin{figure}[h]
\centering
\includegraphics[width=12cm, height=7.5cm]{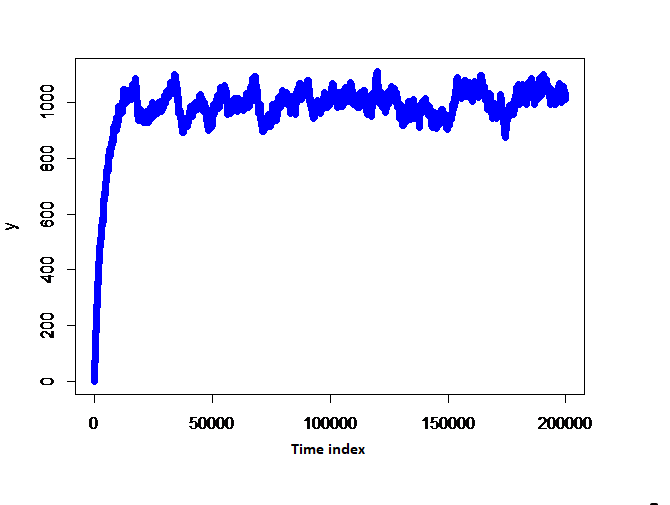}
\caption{A sample trajectory of \eqref{inftysq} where  $X$ is a $\{0,1\}$-valued CTMC with switching rates $\lambda_{01},\lambda_{10}$ where $\lambda_{10}=1000\lambda_{01}$, and $\lambda(\cdot)=1, \mu(X_{s})=X_{s}$ for any $s\ge 0.$
}\label{ex1}
\end{figure}

The dynamics of $X$ implies that the long-time proportion of occurrence of the state $X=1,$ is $\frac{1}{1001},$ which is minuscule, causing the counts of $Y$ initially rise because the event $\{Y \,\, \text{reduces by one}\}$ took place only when $\{X=1\}$ (i.e, once in $1001$ time unit (in average)), while in the remaining time $Y$ increases by a rate $1$ Poisson process. Hence the count $Y$ grows almost linearly. When the count approaches $1000,$ then the number of incoming people in the whole regenerating cycle of $X$ denoted by $\{0\to 1, 1\to 0\}$ will be in a stochastic equilibrium with the total number of counts reduced while $X=1,$ in that cycle. This is due to the aggregate large counts of $Y,$ the intensity of the number of services done by the system governed by $N_{2}\Big(\int_{0}^{t}Y_{s-}\mu(X_{s})ds\Big)$ in \eqref{inftysq} will be large, which matches approximately with the total counts of incoming people in every regeneration cycle. Hence, in the long run, the count of $Y$ will be in some sort of equilibrium, though that limiting measure is not known in a closed form, it would be very challenging to characterize that in a tractable form if $S$ is countably infinite. 

Some notable works on Markov-modulated infinite server queues are done in \cite{blom2014markov},\cite{anderson2016functional} that focus on establishing fluid limit and pathwise approximation via FCLT in mind respectively. In \cite{cinneide1986dynamics} the authors considered this problem when the environment is modeled by a finite state-space Markov process and derived factorial moments for the steady state. But, for an infinite state space, this approach may not yield a closed-form expression. In this article, we will give a representation for an $S$-valued semi-Markovian $X$ that is more general than a continuous time-Markov chain, with some favorable regenerative renewal property, discussed in the next section. In a Markov-modulated environment, the sojourn time for any state $\{X_{t}=i\}$ follows an Exponential distribution, with a rate that depends only on the current state. In contrast, in a semi-Markovian environment, sojourn times can follow any distribution, not limited to the Exponential, and may depend not only on the current state but also on the next state. This added flexibility in semi-Markov modulation allows for more versatile modeling, where the distribution of sojourn times can be tailored to reflect transitions influenced by future state. This dependence on future state makes semi-Markov models particularly suitable for incorporating control-related formulations into the latent dynamics of stochastic models. The main contributions of this work are :
\begin{enumerate}[(a)]
\item Given $\lambda,\mu$ being non-constant functions, a mixture of Poisson distribution emerges in the time asymptotic limit where the mixing measure can be explicitly characterized as a unique solution of a stochastic recurrence equation.
\item Due to the explicit characterization of the mixing measure, a representation of the moments of the limiting measure is given, along with an approximate sampling scheme with the relevant convergence diagnostics.
\end{enumerate}

While finishing this manuscript, we came across a related work \cite{d2008m} that closely aligns with the objective of this study. However, our approach differs fundamentally, as it builds on the renewal regenerative structure of the background semi-Markov process, in contrast to the matrix-geometric method used in \cite{d2008m}. This distinction enables us to express the time-asymptotic measure of $Y$ explicitly, even when $S$ is countably infinite. Additionally, the state-space representation of the mixing measure helps us create an easy-to-use sampling scheme to simulate from it, which can be used for analyzing its statistical properties or making long-term predictions via making an effective confidence interval that is exact upto a certain level, asymptotically in time.

This paper is organized as follows. Section \ref{sec2} sets the structure of the semi-Markovian environment $X$ along with preliminary assumptions. In Section \ref{MR}, exact long-time characterization for $(Y,X)$ is given along with the recursive expression for the moments of the marginal limiting measure of $Y$. Section \ref{SS} presents an approximate simulation scheme to generate a sample from the mixture measure, and hence from the steady-state distribution, along with some examples.   In Section \ref{example} we present an example where $Y$ marginally behaves as a marginal of an infinite server queue, but the environment $X$ gets feedback from $Y,$ and hence Assumption \ref{A0} is violated, resulting in long-time transient behaviour for $Y$. In Section \ref{Conclu} we present some comments and future research directions. All the proofs of the results will be presented in Section \ref{proof} with two additional results in the Appendix \ref{App}.

\section{Preliminaries and description of the semi-Markovian $X$}\label{sec2}

As a generalization of the continuous-time Markov chain, the theoretical foundation of the Semi-Markov process and its time asymptotic behaviors were first explored by Cinlar in his seminal works \cite{cinlar1969markov} \& \cite{ccinlar1969semi} along with his earlier studies on queues \cite{ccinlar1967queues}. We follow a similar formalism here. Let $X$ be an $S$-valued semi-Markov process (where $S$ is a countable set) with some underlying structures and relevant assumptions that we will introduce here. Consider a time-homogeneous Markov chain $(U, T) = (U_n, T_n)_{n \in \mathbb{N}}$ taking values in $S \times \mathbb{R}_{+}$, with transition dynamics governed by the kernel
\beqn
Q_{ij}(t) := P\big[U_{n+1} = j, T_{n+1} \leq t \mid U_n = i, T_n = s\big], \quad \forall n \in \mathbb{N}, \, \forall s, t \geq 0,
\label{Q}
\eeqn
where $P[U_0 = i, T_0 = 0] = p_i$ for each $i \in S$, and $\sum_{i \in S} p_i = 1$. The kernel $Q_{ij}(t)$, which represents the transition probabilities between states, does not depend on the specific value of $T_n = s$. Therefore, we can simplify the expression for the transition kernel as
\[
Q_{ij}(t) = P\big[U_{n+1} = j, T_{n+1} \leq t \mid U_n = i\big].
\]

Let the cumulative sojourn time be denoted by $S_n := \sum_{i=0}^{n} T_i$ for $n \in \mathbb{N}$. The process $X = (X_t : t \geq 0)$, that captures the regime-switching dynamics, is then defined as
\beqn
X_t := U_{N_t}, \quad \text{where} \quad N_t := \esssup\Big\{n : S_n \leq t\Big\}.
\label{Ydef}
\eeqn
This definition indicates that $T_n$ represents the sojourn time of the process $X$ in state $U_{n-1}$ for $n \geq 1$, with $T_0 := 0$.

Marginally, the discrete-time process $U = (U_n : n \in \mathbb{N})$ forms an $S$-valued Markov chain (independent of everything else) with transition kernel $P = (P_{ij}: i,j \in S)$, where
\[
P_{ij} := P\big[U_{n+1} = j \mid U_n = i\big], \quad P_{ii} = 0 \quad \forall i \in S.
\]
This defines the marginal dynamics of the embedded chain $U$. The process $X$ can thus be seen as the marginal of a Markov renewal process $(S_n, U_n)$, commonly known as a semi-Markov process \cite{cinlar1969markov}. Notably, $X$ is not necessarily Markovian.

Next, introduce the set
\beqn
\mathcal{G} := \Big\{F_{ij} \in \mathcal{P}(\mathbb{R}_+) : i,j \in S, \, i \neq j, \, P_{ij} > 0, \, F_{ij}(0) = 0, \, F_{ij}(\infty) = 1\Big\},
\label{G}
\eeqn
which consists of probability distributions (without atoms at 0) for the sojourn times of $X$. Specifically, $F_{ij}$ represents the law of the sojourn time at state $i$ when transitioning to state $j \in S$. For all $n \geq 0$, we define
\[
F_{ij}(t) := P\big[T_{n+1} \leq t \mid U_n = i, U_{n+1} = j\big].
\]
Using these definitions, we can decompose $Q_{ij}(t)$ as follows:
\beqn
Q_{ij}(t) = P\big[U_{n+1} = j \mid U_n = i\big] P\big[T_{n+1} \leq t \mid U_n = i, U_{n+1} = j\big] = P_{ij} F_{ij}(t).
\label{multiplic}
\eeqn
Given the transition kernel $P$, the set $\mathcal{G}$, and the initial distribution $\{p_i : i \in S\}$, the dynamics of the pair $(U, T)$ are fully specified. Consequently, the process $X$ defined in \eqref{Ydef} is well-defined.

Note that the semi-Markov process $X$ allows for a more complex dependence structure compared to a continuous-time Markov chain, which can be recovered as a special case where the sojourn time distributions are exponential, i.e., $F_{ij}(t) = 1 - e^{-\lambda_i t}$ for some $\lambda_i > 0$ and all $i,j \in S$. The semi-Markov formulation gives more flexibility and control on the sojourn time distributions which may depend on the next state. This opens up a huge number of modeling opportunities for the underlying regime-switching mechanisms.  

Finally, the semi-Markov process $X$ is called conservative at state $j \in S$ if
\[
P[N_t < \infty \mid X_0 = j] = 1 \quad \forall t \in [0, \infty),
\]
which is a regularity condition (see \cite{cinlar1969markov}, p. 150). This condition ensures that $X$ is non-explosive, meaning that the number of transitions within any finite time interval is almost surely finite when starting from any state $j \in S$.

\bas\label{As0}Some assumptions on the dynamics of $(U,T)$ are below. 
\begin{enumerate}[(a)]
    \item The conditional probability 
    \[
    P\big[U_{n+1} = j, T_{n+1} \leq t \mid U_n = i, T_n = s\big]
    \]
    does not depend on \(s\). Consequently, \(s\) is absent from the notation \(Q_{ij}(t)\) in the definition \eqref{Q}.

    \item The embedded chain \(U\) is a time-homogeneous, irreducible, aperiodic, positive recurrent, discrete-time homogeneous Markov chain, taking values in \(S\). It is independent of everything else. The transition matrix \(P\) satisfies the following properties: 
    \[
    P_{ii} = 0, \quad P_{ij} \in [0,1], \quad \sum_{j \in S} P_{ij} = 1 \quad \forall i,j \in S.
    \]
    Furthermore, the chain has a unique stationary distribution \(\mu \in \mathcal{P}(S)\), which satisfies the measure-valued equation \(\mu = \mu P\).

    \item The set \(\mathcal{G}\) is such that for all \(i,j \in S\), the distribution functions \(F_{ij}\) satisfy \(F_{ij}(0) = 0\) and contain no atoms in \(\mathbb{R}_{\geq 0}\), meaning each \(F_{ij}\) is absolutely continuous with respect to the Lebesgue measure on \(\mathbb{R}_{\geq 0}\). For each \(i,j \in S\), define the mean of the random variables distributed according to \(F_{ij}(\cdot)\) as
    \[
    m_{ij} := \int_{0}^{\infty} \big(1 - F_{ij}(t)\big) dt,
    \]
    and assume that 
    \[
    \sup_{i,j \in S} m_{ij} < \infty.
    \]

    \item The semi-Markov process \(X\) is conservative for all \(j \in S\).
\end{enumerate}
\eas

For any $i,j\in S,$ let $W_{ij}$ be defined as the $F_{ij}$ distributed random variable denoting the sojourn time of $X$ at state $i$ conditioned on the future state $j$. Then we define the distribution of sojourn time at state $i$ unconditional on the future value as
$$P[T_{n+1}<t| U_{n}=i]= \sum_{j\in S}P_{ij}F_{ij}(t)=:F_{i}(t)$$ denoted by $F_{i}(\cdot)$ and denote the corresponding random variable by $W_{i}.$ For each $i\in S,$ by $m_{i}$ we denote the mean of $W_{i}$ as
$$m_{i}:=EW_{i}=\int_{0}^{\infty}(1-F_{i}(t))dt=\int_{0}^{\infty}(1-\sum_{j\in S}P_{ij}F_{ij}(t))dt=\sum_{j\in S}P_{ij}m_{ij}.$$

Assumption \ref{As0}(a) implies that for each $j \in S$, we have $\mu_j > 0$, and the positive recurrence of $U$ guarantees the existence of hitting times at state $j$, defined as:
$$
N_{0}^{j} := \inf\{n \ge 0 : U_{n} = j\}, \quad N_{k}^{j} := \inf\{n > N_{k-1}^{j} : U_{n} = j\} \quad \text{for} \,\, k \geq 1.
$$
Denote the set $\{N_{k-1}^{j}, N_{k-1}^{j} + 1, \dots, N_{k}^{j} - 1\}$, which is a successive array of integers, by $A_{k}^{j}$, representing the $k$-th return of the process $U$ to state $j$ for $k \geq 1$. The positive recurrence of $U$ implies that $E|A_{k}^{j}| = E[N_{k}^{j} - N_{k-1}^{j}] < \infty$. A transition from state $i$ to state $j$ (denoted as $(i, j)$ or $i \to j$) for the chain $U$ occurs if $P_{ij} > 0$, and the set of all possible transitions is given by $\{(i, j) \in S \times S : P_{ij} > 0\}$.

The recursion path $A_{k}^{j}$ can alternatively be represented as the set $A_{k}^{j} = \{j, i_{1}, i_{2}, \dots, i_{l_{1}}\}$, where $l_{1} = [N_{k}^{j} - N_{k-1}^{j}] - 1$, corresponding to the sequence of transitions $j \to i_{1} \to i_{2} \to \dots \to i_{l_{1}} \to j$. Let $\widetilde{A}_{k}^{j}$ denote the set of tuples representing the successive state transitions in $A_{k}^{j}$, expressed as:
\[
\widetilde{A}_{k}^{j} = \{(j, i_{1}), (i_{1}, i_{2}), (i_{2}, i_{3}), \dots, (i_{|A_{k}^{j}|-1}, j) : i_{l} \in S, \, i_{l} \neq j \,\, \forall l = 1, \dots, |A_{k}^{j}| - 1\}.
\]
Note that $\widetilde{A}_{k}^{j}$ corresponds one-to-one with $A_{k}^{j}$, as both describe the $k$-th recursion cycle from state $j$ back to itself. It is possible for a specific transition $(i_{k}, i_{k+1})$ to appear multiple times in $\widetilde{A}_{k}^{j}$, as it may recur several times within a single cycle of returns to state $j$.

Observe that $P[T_{0}=0]=1$ and for any arbitrary $u_{1},\ldots,u_{n}>0$ and any arbitrary $i_{1},\ldots,i_{n+1}\in S$ one has the following identity for any $n\in \mathbb{N}$
\beqn
P\big[T_{1}\le u_{1},\ldots,T_{n}\le u_{n} |U_{0}=i_{1},U_{1}=i_{2},\ldots, U_{n}=i_{n+1}\big]&\stackrel{(a)}{=}&\frac{\prod_{k=1}^{n}Q_{_{i_{k}i_{k+1}}}(u_{k})}{\prod_{k=1}^{n} P_{i_{k}i_{k+1}}}\s\non\\
&\stackrel{(b)}{=}&\prod_{k=1}^{n}F_{_{i_{k}i_{k+1}}}(u_{k})\label{Prod}
\eeqn
where $(a)$ and $(b)$ in above equalities hold due to \eqref{Q} and \eqref{multiplic} (as a result of Assumption \ref{As0} (c)) respectively. Display \eqref{Prod} suggests that conditioned on path $\{U_{0}=i_{1},U_{1}=i_{2},\ldots,U_{n}=i_{n+1}\}$ the sojourn times $\{T_{1},\ldots T_{n}\}$ in corresponding states are independent, which also prompts us to consider the regenerating renewal intervals for $X$ as well. We define the hitting times of $X$ at state $j\in S,$ denoted by $\{\tau_{k}^{j}:k\ge 0\}$ as
$$\tau_{0}^{j}:=\sum_{i=0}^{N_{0}^{j}}T_{i},\s \tau_{k}^{j}:=\sum_{i=0}^{N_{k}^{j}}T_{i}\s\forall k\ge 1,$$
and denote the last hitting time to state $j$ before $t$ by 
\beqn
g_{t}^{j}:=\max(\sup\{n: \tau_{n}^{j}\le t\},0),\s \sup\emptyset=-\infty.\label{g_{t}}
\eeqn

  It is clear that any functionals of $X$ in $\{[\tau_{i}^{j},\tau_{i+1}^{j}):i\ge 0\}$ behave identically and independently as $\Big\{\sum_{i=N_{k-1}^{j}+1}^{N_{k}^{j}}T_{i}:k\ge 1\Big\}$ are iid due to the fact that $\mathcal{G}$ is a fixed set and $\{A_{k}^{j}\}_{k\ge 1}$ are regenerating sets for $U$ at state $j\in S.$ For $Y$ denote the $k$-th recursion interval at state $j\in S$ by $I_{k}^{j}:=[\tau_{k-1}^{j},\tau_{k}^{j}).$  From Assumption \ref{As0}(b) it follows that
\beqn
E|I_{k}^{j}|&=&E\sum_{n=1}^{\infty}T_{n} 1_{\{(U_{n-1},U_{n})\in \widetilde{A}_{k}^{j}\}}\non\\
&=&E \Big[\sum_{n=1}^{\infty}E\big[T_{n}| U_{n}=i_{n},U_{n-1}=i_{n-1}\big]1_{\{(U_{n-1},U_{n})\in \widetilde{A}_{k}^{j}\}}\Big]\non\\
&\le& \big(\sup_{i,j\in S}m_{ij}\big) E|\widetilde{A}_{k}^{j}|\non\\&=&\big(\sup_{i,j\in S}m_{ij}\big) E|A_{k}^{j}|<\infty,\s\forall j\in S,\s \forall k\in \mathbb{N},\non
\eeqn
as a consequence of $E\big[T_{n}| U_{n}=i_{n},U_{n-1}=i_{n-1}\big]=m_{i_{n-1},i_{n}}\le \sup_{i,j\in S}m_{ij}$, uniformly for all $(i_{n-1},i_{n})\in S^{2}.$

 For any event $A$ by $P_{_{i}}[A]$ we denote $P[A\mid Y_{0}=i].$ For any random variable $Z,$ distribution of $Z$ is often denoted as $\mathcal{L}(Z).$ For any functions $c,d:S\to\R$ and $j\in S$, define 
\beqn
\,\,\,\,\, G^{c,d}_{j}(x):=\int_{0}^{x}d(j)e^{-c(j)(x-s)}ds=x d(j)1_{\{c(j)=0\}}+ \frac{d(j)}{c(j)}\Big(1-e^{-xc(j)}\Big)1_{\{c(j)\neq 0\}}.
\label{Gfun}
\eeqn

\begin{remark}\label{R0}
Assumption \ref{As0}(a) is crucial for running the regenerative renewal argument for $X$. Consequently, any functionals of $X$ within the intervals ${[\tau_{i}^{j}, \tau_{i+1}^{j}): i \ge 0}$ behave identically and independently for each $j \in S$. If Assumption \ref{As0}(c) does not hold, the process $X$ would not independently regenerate after hitting state $j$, as the sojourn time at state $j$ would depend on the sojourn time of the previous state before jumping to $j$. This dependency contradicts the regeneration property of $X$ within the intervals ${[\tau_{i}^{j}, \tau_{i+1}^{j}): i \ge 0}$.
\end{remark}

Theorem 7.14 of \cite{cinlar1969markov} (Page 160) suggests that if $X$ is an irreducible, recurrent (The term `Persistent' is used in \cite{cinlar1969markov} for recurrent), aperiodic semi-Markov process along with $m_{i}:=\sum_{j\in S}P_{ij}m_{ij}<\infty,$ then
\beqn
\lim_{t\to\infty}P_{i}[X_{t}=j]=\frac{\mu_{j}m_{j}}{\sum_{k\in S}\mu_{k}m_{k}},\s\forall i\in S\s\label{lim1}
\eeqn
which follows from conditions of Assumption \ref{As0}. By $\pi$ denote the probability vector $(\pi_{i}:i\in S)$ such that 
\beqn
\pi_{j}:= \frac{\mu_{j}m_{j}}{\sum_{k\in S}\mu_{k}m_{k}}\,\,\forall j\in S.\label{pi}
\eeqn
 From Assumption \ref{As0} it follows that $\pi_{j}>0, \forall j\in S.$ We loosely describe a process to be ergodic when it converges to a limiting distribution that does not depend on the initial distribution and hence a semi-Markov process $X$ having all aforementioned properties (irreducible, recurrent, aperiodic, $m_{k}<\infty,\,\,\forall k\in S$) is ergodic.

Regenerative property of $X$ suggests that any specific functional of $X$ in $\{[\tau_{i-1}^{j},\tau_{i}^{j}): i\ge 1\}$ behave identically independent random variables. Denote the sigma algebra $\sigma\{X_{t}: t\in [\tau_{i-1}^{j},\tau_{i}^{j})\}$ by $\mathcal{H}_{i}$ for each $i\ge 1.$ In this setting we have following result.

For any time $t>0,$ denote $t-\tau^{j}_{g_{t}^{j}}, \tau_{g_{t}^{j}+1}^{j}-t$ by $A_{j}(t),B_{j}(t)$ as respectively backward residual time and forward residual times at state $j\in S$. Clearly $A_{j}(t)+B_{j}(t)= \tau_{g_{t}^{j}+1}^{j}-\tau^{j}_{g_{t}^{j}},$  length of the regenerating interval containing $t.$ Results from \cite{cinlar1969markov},\cite{asmussen2008applied}  suggest if $E[\tau^{j}_{2}-\tau_{1}^{j}]<\infty,$ (which follows from $E|I_{1}^{j}|<\infty,$ ensured under Assumption \ref{As0}) then both $A_{j}(t),B_{j}(t)$ are $O_{_{P}}(1)$ as both quantities
\beqn
P[A_{j}(t)>x]\to \frac{\int_{x}^{\infty}P\big[|I_{k}^{j}|>y\big]dy}{E|I_{k}^{j}|},\s P[B_{j}(t)>x]\to\frac{\int_{x}^{\infty}P\big[|I_{k}^{j}|>y\big]dy}{E|I_{k}^{j}|},\label{eRes}
\eeqn
as $t\to\infty$.

 The following proposition is a consequence of the regenerative renewal property of $X.$
\begin{Proposition}\label{Psemi1}
Suppose Assumption \ref{As0} holds. For any $t>\tau_{0}^{j},$
\begin{enumerate}[(a)]
\item the distribution of $\tau_{g^{j}_{t}+2}^{j}-\tau_{g^{j}_{t}+1}^{j}$ is identical as $\tau^{j}_{1}-\tau_{0}^{j}.$
\item any functional of $X$ in $\{s: s\ge \tau^{j}_{g_{t}^{j}+1}\}$ is independent of $g_{t}^{j}$ and identically distrubuted as the same functional of $X$ on $\{s:s\ge \tau_{0}^{j}\},$ for any $t\ge \tau_{0}^{j}.$
\item suppose conditioned on any event $K_{t},$ marginals of two processes $X^{(1)}:=\big(X^{(1)}_{t}:t\ge 0\big), X^{(2)}:=\big(X^{(2)}_t:t\ge 0\big)$ at time $t$ are respectively $\sigma\{\mathcal{H}_{i}:i\le g^{j}_{t}\}$ and $\sigma\{\mathcal{H}_{i}: i\ge g^{j}_{t}+2\}$ measurable for any $j\in S$. Then conditioned on the event $\{X_{t}=j'\}$ following holds 
$$P[X^{(1)}_{t}\in A ,X^{(2)}_{t} \in B\mid K_{t},X_{t}=j']=P[X^{(1)}_{t}\in A \mid K_{t},X_{t}=j'] P[X^{(2)}_{t} \in B\mid  K_{t}],$$
for any two sets $A,B$ for any $j'\in S.$
\end{enumerate}
\end{Proposition}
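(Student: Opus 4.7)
The plan is to exploit the block-iid structure of $X$ induced by the renewal epochs $\tau^{j}_{k}$. The strong Markov property of the embedded chain $U$ applied at each return time $N^{j}_{k}$, combined with the conditional product formula \eqref{Prod} for sojourn times given the $U$-path (which holds under Assumption \ref{As0}(a) and the fact that $\mathcal{G}$ is a fixed family), yields that $\{\mathcal{H}_{i}\}_{i\ge 1}$ is a sequence of mutually independent, identically distributed sigma-algebras, each encoding a complete cycle of $X$ from $j$ back to $j$. This block-iid structure is the engine driving all three parts.

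For (a), I would note that $\tau^{j}_{g^{j}_{t}+1}$ is the first regeneration epoch of $X$ at state $j$ after time $t$, so by definition $U$ is at $j$ there and a fresh cycle $\mathcal{H}_{g^{j}_{t}+2}$ begins. By the block-iid structure, this cycle is distributed as $\mathcal{H}_{1}$, which in particular gives $\tau^{j}_{g^{j}_{t}+2}-\tau^{j}_{g^{j}_{t}+1}\stackrel{d}{=}\tau^{j}_{1}-\tau^{j}_{0}$. For (b), the same reasoning shows that the trajectory of $X$ on $\{s\ge \tau^{j}_{g^{j}_{t}+1}\}$ is composed of the blocks $\mathcal{H}_{g^{j}_{t}+2},\mathcal{H}_{g^{j}_{t}+3},\ldots$, which are iid cycles independent of $(\mathcal{H}_{1},\ldots,\mathcal{H}_{g^{j}_{t}+1})$ and hence of $g^{j}_{t}$ itself. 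Any measurable functional of $X$ on this tail is therefore distributed identically to the corresponding functional of $X$ on $\{s\ge \tau^{j}_{0}\}$ and is independent of $g^{j}_{t}$; this is made rigorous by conditioning on $\{g^{j}_{t}=n\}$ for each $n$ and summing.

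For (c), I would use the same trichotomy but split explicitly around the block $\mathcal{H}_{g^{j}_{t}+1}$ that contains $t$. Conditionally on $\{g^{j}_{t}=n\}$, the three families $\sigma(\mathcal{H}_{i}:i\le n)$, $\mathcal{H}_{n+1}$, and $\sigma(\mathcal{H}_{i}:i\ge n+2)$ are mutually independent by the block-iid structure. Since $X^{(1)}_{t}$ is measurable with respect to the first family, the event $\{X_{t}=j'\}$ is measurable with respect to the second, and $X^{(2)}_{t}$ is measurable with respect to the third, one obtains the conditional independence $X^{(2)}_{t}\ci (X^{(1)}_{t},\{X_{t}=j'\})\mid K_{t},\{g^{j}_{t}=n\}$. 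Averaging over $n$ and using that the law of the future blocks does not depend on $n$ gives the claimed factorization; crucially, the second factor on the right does not condition on $\{X_{t}=j'\}$ because the future blocks are independent of the middle one.

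The main obstacle is (c), primarily because $\mathcal{H}_{g^{j}_{t}+1}$ obeys a size-biased (renewal-paradox) law relative to a generic block and $g^{j}_{t}$ is itself a random index correlated with both $X^{(1)}_{t}$ and $X_{t}$. The observation that resolves this is that $\tau^{j}_{g^{j}_{t}+1}$ is a stopping time at which $U$ hits $j$, so the strong renewal property applies there and the blocks after it remain unbiased iid cycles whose joint law is independent of everything happening up to $\tau^{j}_{g^{j}_{t}+1}$. This independence is precisely what lets the conditioning on $\{X_{t}=j'\}$ drop out of the $X^{(2)}_{t}$-factor, and one also needs the implicit compatibility of $K_{t}$ with the block decomposition to carry through the averaging step.
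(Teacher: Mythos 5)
Your proposal is correct and follows essentially the same route as the paper: both arguments condition on $\{g^{j}_{t}=n\}$, use that $\{g^{j}_{t}=n\}=\{\tau^{j}_{n}\le t<\tau^{j}_{n+1}\}$ together with $\{X_{t}=j'\}$ is $\sigma\{\mathcal{H}_{i}:i\le n+1\}$-measurable while the post-$\tau^{j}_{n+1}$ blocks form an independent iid family, and then sum over $n$. The trichotomy you describe for part (c) --- past blocks, the block containing $t$, and future blocks --- is exactly the independence structure the paper invokes in its steps $(c)$ and $(d)$.
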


For each $j\in S,$ let $\pi^{*}_{j}$ be the distribution on $\R^{+}$ such that for any $x>0,$ 
\beqn
\pi^{*}_{j}[x,\infty):=\frac{\sum_{k\in S}P_{jk}\int_{x}^{\infty}(1-F_{jk}(y))dy}{m_{j}}.\label{pi_j^*}
\eeqn

\begin{remark}\label{R1}
$\pi_{j}^{*}$ in \eqref{pi_j^*} represents the size-biased distribution of averaged sojourn time at a state $j\in S$. Notably, when $F_{jk}(x)=1-e^{-\lambda_{j}x}\,\,\forall k,$ (corresponds to the case of continuous time Markov chain (CTMC)) then $\pi^{*}_{j}[x,\infty)=e^{-\lambda_{j}x}$ for $x>0.$ This holds because the Exponential distribution is the unique distributional fixed point of the size-bias transformation. Now, consider a variation where $F_{jk}(x):=1-e^{-\lambda_{jk}x},$ for some $\lambda_{jk}\in(0,\infty)\,\,\forall j,k\in S.$ In this case, $\pi^{*}_{j}$ becomes following $$\pi^{*}_{j}[x,\infty):=\frac{\sum_{k\in S}\frac{P_{jk}}{\lambda_{jk}}e^{-\lambda_{jk}x}}{\sum_{k\in S}\frac{P_{jk}}{\lambda_{jk}}}\s\forall x>0,$$
indicating $\pi_{j}^{*}$ as a mixture of Exponentials. If we assume $F_{jk}(\cdot):= F_{j}(\cdot)$ for some distribution function $F_{j}(\cdot)$ for all $j,k\in S,$  then $\pi^{*}_{j}[x,\infty)=\frac{\int_{x}^{\infty}(1-F_{j}(y))dy}{m_{j}},$ for any $x>0.$
\end{remark}

For any two measures $\nu_1, \nu_2$ that have finite first moment, their distance can be measured by Wasserstein-$1$ distance
$$\mathcal{W}_{1}(\nu_1, \nu_2)=\inf_{(Y_1, Y_2)\colon \mathcal{L}(Y_1)=\nu_1,\mathcal{L}(Y_2)=\nu_2}E\big(\|Y_1-Y_2\|_1\big),$$
that measures the L$_{1}$ distance of two given measures. 

For any $x>0,\,\,\, (x)_{k}$ denotes the $k$-th order factorial moment $$(x)_{k}:=x(x-1)\cdots (x-k+1), \s (x)_{0}=1.$$ For any two integers $k,n$ such that $0\le k \le n,$ the Stirling number of type $2$ denoted by ${n\brace k},$ are defined as  respective coefficients $\{{n\brace k}:k=1,\ldots,n\}$ of the factorial functions of order $\{(x)_{k}:k=1,\ldots,n\}$ in the expansion of $x^{n}.$ That is, for any $x>0,$ and any integer $n$ 
\beqn
x^{n}:=\sum_{k=0}^{n}{n\brace k}(x)_{k}.\label{stirling2}
\eeqn

For a given bivariate random variable $(C,D)$, the following time series is referred to as a stochastic recurrence equation (in short SRE, also referred to as random coefficient AR$(1)$) 
\begin{align*}
Z_{n+1}=C_{n+1}Z_{n}+D_{n+1}\quad\text{with}\quad (C_{i},D_{i})\stackrel{\text{i.i.d}}{\sim} \mathcal{L}(C,D),\quad Z_{n}\ci (C_{n+1},D_{n+1}) 
\end{align*} 
for an arbitrary initial $Z_{0}=z_{0}\in \R$. Let $\log^{+}(a):=\log(\max(a,1))$ for any $a>0$.  
If 
\beqn
P[C=0]=0, \quad E\log|C| <0,\s\text{and}\s E\log^{+}|D|<\infty, \label{sre_conv_conds}
\eeqn
then $(Z_{n})$ has a unique causal ergodic strictly stationary solution solving the following fixed-point equation in law: 
\beqn
Z\mathop{=}^{d}CZ+D\s\text{ with} \s Z\ci (C,D).\label{x=ax+b}
\eeqn
The condition $P[Cx+D=x]<1,$ for all $x\in \mathbb{R}$, rules out degenerate solutions $Z=x$ a.s. We refer to Corollary 2.1.2 and Theorem 2.1.3 in Buraczewski et al. \cite{buraczewski2016stochastic} for further details. For multivariate $d$-dimensional case the existence and uniqueness \cite{erhardsson2014conditions} of the distributional solution \eqref{x=ax+b} holds if
$$\prod_{i=1}^{n}C_{i}\,\,\stackrel{a.s}{\to}\,\, 0,\s\&\s \bigg(\prod_{i=1}^{n}C_{i}\bigg) D_{n+1}\,\,\stackrel{a.s}{\to}\,\, 0.$$

Let $\{R_{n}\}_{n \ge 1}$ be an arbitrary sequence of random variables, and let $N^{*}_{t}$ be an $\mathbb{N}$-valued stopped random time. Suppose there exists a random variable $R_{\infty}$ such that $R_{n} \stackrel{d}{\to} R_{\infty}$ as $n \to \infty$. Define $N^{}_{t}$ as any stopped random time (defined on the same probability space as $\{R_{n}\}_{n \ge 1}$) for which there exists an increasing function $c(\cdot)$ such that $c(t) \to \infty$ and $\frac{N^{}_{t}}{c(t)} \stackrel{P}{\to} 1$ as $t \to \infty$. In this context, Anscombe's contiguity condition (Gut \cite{gut2009stopped}, p. 16) is useful for establishing the weak convergence of the process $(R_{N^{*}_{t}} : t \ge 0)$ as $t \to \infty$. It is stated as follows:

Given $\epsilon > 0$ and $\eta > 0$, there exist $\delta > 0$ and $n_{0}$ such that
\beqn
P\bigg(\max_{\{m: |m-n|<n\delta\}}|R_{m}-R_{n}|>\epsilon\bigg)<\eta,\s\forall\s n> n_{0}.\label{Anscombe}
\eeqn 
If the sequence $\{R_{n}\}_{n \ge 1}$ satisfies \eqref{Anscombe}, then it is sufficient to conclude that $$R_{N^{*}_{t}}\stackrel{d}{\to}R_{\infty}\s\text{as}\s t\to\infty.$$

\section{Main results}\label{MR}
\bt\label{T1}
Suppose Assumptions \ref{A0} \& \ref{As0} hold for $(X,Y)$ satisfying \eqref{inftysq} along with the condition 
\beqn
E_{\pi}\mu(\cdot)>0\s\,\, \& \s\,\, E\Big[\log^{+}\Big(\int_{\tau^{j}_{0}}^{\tau^{j}_{1}}\lambda^{}(X_{s})e^{-\int_{s}^{\tau^{j}_{1}}\mu(X_{r})dr}ds\Big)\Big]<\infty\label{condT1}
\eeqn
for every $j\in S.$ The limiting law of $(Y,X)$ can be expressed as
\begin{equation}
(Y_{t},X_{t}) \stackrel{d}{\to} \bigg(\sum_{j\in S}^{}\delta_{U_{*}}(\{j\})\text{Poi}(W_{j}),U_{*}\bigg)\s \text{as} \s t\to\infty\label{ePoi1}
\end{equation}
where for each $j\in S,$  $U_{*}\ci (W_j)_{j\in S}$, $U_{*}\sim \pi$,   and $W_{j}$ is a random variable defined as
\beqn
W_j\stackrel{d}{=}
\lambda(j)\int_0^{T_j}e^{-\mu(j)(T_j-s)}ds+e^{-\mu(j)T_j}V^{*}_j,\label{Z_{j}}\label{mixture}
\eeqn

where $T_j\sim \pi_{j}^{*}$ is independent of $V^{*}_j$, and 
$\mathcal{L}(V^{*}_j)$ is the unique solution to \eqref{x=ax+b} with $(C,D)$ having the distribution of 
\begin{eqnarray}
\Big(e^{-\int_{\tau^{j}_{0}}^{\tau_{1}^{j}}\mu(X_{s})ds},
\int_{\tau^{j}_{0}}^{\tau_{1}^{j}}\lambda^{}(X_{s})e^{-\int_{s}^{\tau^{j}_{1}}\mu(X_{r})dr}ds\Big).\label{AB}
\end{eqnarray}
\et

\begin{remark}\label{R2} 
\begin{enumerate}[(a)]
\item Since $\mu(\cdot)$ is non-negative, the condition $E_{\pi}\mu(\cdot)>0$ is same as saying $P\big[\int_{\tau_{0}^{j}}^{\tau_{1}^{j}}\mu(X_{s})ds>0\big]>0$ where in general $\int_{\tau_{0}^{j}}^{\tau_{1}^{j}}\mu(X_{s})ds\ge 0$ a.s.
\item Since $E\Big[\int_{\tau^{j}_{n}}^{\tau_{n+1}^{j}}\mu(X_{s})ds\Big]=\big(E_{\pi}\mu(\cdot) \big)E|\tau_{n+1}^{j} -\tau_{n}^{j}|$ (by renewal theory), the conditions in \eqref{condT1} corresponds, in the current setting, to the general condition \eqref{sre_conv_conds} for the existence of a stationary solution to the stochastic recurrence equation
$$
V_{j,n+1}=e^{-\int_{\tau^{j}_{n}}^{\tau^{j}_{n+1}}\mu(X_{s})ds}V_{j,n}+\int_{\tau^{j}_{n}}^{\tau^{j}_{n+1}}\lambda^{}(X_{s})e^{-\int_{s}^{\tau^{j}_{n+1}}\mu(X_{r})dr}ds
$$ 
with affine invariant solution of the form 
$$
V^{*}_{j}\stackrel{d}{=}e^{-\int_{\tau^{j}_{0}}^{\tau^{j}_{1}}\mu(X_{s})ds}V^{*}_{j}+\int_{\tau^{j}_{0}}^{\tau^{j}_{1}}\lambda^{}(X_{s})e^{-\int_{s}^{\tau^{j}_{1}}\mu(X_{r})dr}ds.
$$ 
Second condition in \eqref{condT1} defines the integrability criteria for $\lambda(\cdot).$ Observe that if $\sup_{j\in S} \mu(j)<\infty$ and $E_{\pi}\big[\lambda^{}(\cdot)\big]<\infty$, then the second condition in \eqref{condT1} follows immediately as a consequence of the inequalities $\log^{+}|xy|\le \log^{+}|x|+\log^{+}|y|$ and $\log^{+}|x|\le |x|$ for any $x,y$.
\end{enumerate}
\end{remark}

A multivariate version of this result under the Markovian regime-switching context is explored in \cite{cappelletti2021dynamics} with applications in stochastic mono-molecular biochemical reaction networks.

Any integer order of moments can be computed for the time-limiting measure of the total counts which is the first marginal of distribution of the RHS of \eqref{ePoi1},

\beqn
\widetilde{\pi}_{1}:=\mathcal{L}\bigg(\sum_{j\in S}^{}\delta_{U_{*}}(\{j\})\text{Poi}(W_{j})\bigg),\s U_{*}\sim \pi,\label{pi1}
\eeqn
 given some integrability conditions \eqref{condT1}. 

Denote the random quantities in \eqref{AB} by $(C_{1}^j,D_{1}^j).$ The superscript in $(C_{1}^j,D_{1}^j)$ should not be confused with idea of exponents. It is there to represent the fact that these random elements are outputs of the first recursion of $X$ at state $j\in S.$ For any $\alpha\in\R,$ by $\big((C_{1}^j)^{\alpha},(D_{1}^j)^{\alpha}\big),$ we denote $\alpha$-th exponent of $(C_{1}^j,D_{1}^j)$.
 
\begin{Proposition}\label{PrMom}Suppose assumptions and conditions of Theorem \ref{T1} hold. There exists an integer $n\in \mathbb{N}$ such that 
\beqn
E\big[(C_{1}^j)^{l}\big]<1,\s \text{and}\s E[(C_{1}^j)^{k}(D_{1}^j)^{l-k}]<\infty\s\text{ for all }\,\,\,\,0\le k\le l\le n.\label{mom0}
\eeqn
 Then
\beqn
\int_{0}^{\infty}y^{n}\widetilde{\pi}_{1}(dy)= \sum_{j\in S}\pi_{j}\sum_{k=1}^{n}{n\brace k}E\Big[\lambda(j)\int_0^{T_j}e^{-\mu(j)(T_{j}-s)}ds+e^{-\mu(j)T_j}V^{*}_j\Big]^{k},\label{mom1}
\eeqn

where ${n\brace k}$ is the Stirling number of type $2,$ for all $j\in S,$ $T_j\sim \pi_{j}^{*},$ $T_j\perp V^{*}_j$ and for all $1\le l\le n,$ \s $m^{(l)}_{j}:=E\big(V^{*}_j\big)^{l}$ can be found out from the following recursion with $m^{(0)}_{j}=1,$ and 
\beqn
m^{(l)}_{j} = \frac{1}{1-E[(C_{1}^j)^{l}]}\sum_{k=0}^{l-1}{l\choose k}E\big[(C_{1}^j)^{k}(D_{1}^j)^{l-k}\big]m_{j}^{(k)}.\label{mom2}
\eeqn
\end{Proposition}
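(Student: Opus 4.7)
The plan is to combine three ingredients: (i) a conditioning argument that reduces the $n$-th moment of $\widetilde{\pi}_{1}$ to a mixed-Poisson moment, (ii) the classical identity expressing Poisson moments in terms of factorial moments through Stirling numbers of the second kind, and (iii) the distributional fixed point equation for $V^{*}_{j}$ from Theorem \ref{T1}.

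For the first identity \eqref{mom1}, I would begin by conditioning on $U_{*}$. Since Theorem \ref{T1} asserts that $U_{*}\sim \pi$ is independent of $(W_{j})_{j\in S}$,
$$\int_{0}^{\infty}y^{n}\widetilde{\pi}_{1}(dy)=\sum_{j\in S}\pi_{j}\, E\big[\text{Poi}(W_{j})^{n}\big].$$
For a Poisson random variable $Z$ with (possibly random) parameter $\lambda\ge 0$, the $k$-th factorial moment satisfies $E\big[(Z)_{k}\mid \lambda\big]=\lambda^{k}$, so \eqref{stirling2} yields $E[Z^{n}\mid \lambda]=\sum_{k=1}^{n}{n\brace k}\lambda^{k}$ (the $k=0$ summand vanishes for $n\ge 1$ because ${n\brace 0}=0$). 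Taking an outer expectation over $\lambda=W_{j}$ and substituting the mixture representation \eqref{mixture} of $W_{j}$ then produces \eqref{mom1}.

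For the recursion \eqref{mom2}, I would apply the stationary fixed-point identity $V^{*}_{j}\stackrel{d}{=}C_{1}^{j}V^{*}_{j}+D_{1}^{j}$ with $V^{*}_{j}\ci (C_{1}^{j},D_{1}^{j})$. Raising both sides to the $l$-th power, expanding by the binomial theorem, and using independence to factor the expectations gives
$$m_{j}^{(l)}=\sum_{k=0}^{l}\binom{l}{k}E\big[(C_{1}^{j})^{k}(D_{1}^{j})^{l-k}\big]\, m_{j}^{(k)}.$$
Isolating the $k=l$ contribution, which equals $E[(C_{1}^{j})^{l}]\, m_{j}^{(l)}$, transposing it and dividing by $1-E[(C_{1}^{j})^{l}]$, which is strictly positive by \eqref{mom0}, yields \eqref{mom2}.

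The principal obstacle is not algebraic but bookkeeping: one must verify the finiteness of $m_{j}^{(l)}$ for $1\le l\le n$ so that the rearrangement producing \eqref{mom2} is legitimate rather than an $\infty-\infty$ manipulation, and likewise verify $E[W_{j}^{k}]<\infty$ for $1\le k\le n$ so that the conditioning step leading to \eqref{mom1} is justified. I would handle this by induction on $l$: the base case $m_{j}^{(0)}=1$ is immediate, and at step $l$ the right-hand side of \eqref{mom2} is a finite linear combination whose summands are all finite by \eqref{mom0} and the inductive hypothesis on $m_{j}^{(k)}$ for $k<l$. Once $m_{j}^{(k)}<\infty$ is secured for $0\le k\le n$, the finiteness of $E[W_{j}^{k}]$ follows from expanding $W_{j}^{k}$ by the binomial theorem, using $T_{j}\ci V^{*}_{j}$ together with the almost-sure bound $0\le e^{-\mu(j)T_{j}}\le 1$ to dominate each cross term.
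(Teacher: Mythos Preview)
Your proposal is correct and follows essentially the same approach as the paper: condition on $U_{*}$, invoke the Poisson factorial-moment identity $E[(\text{Poi}(w))_{k}]=w^{k}$ together with the Stirling expansion \eqref{stirling2} to obtain \eqref{mom1}, and then derive \eqref{mom2} by raising the fixed-point identity $V^{*}_{j}\stackrel{d}{=}C_{1}^{j}V^{*}_{j}+D_{1}^{j}$ to the $l$-th power and rearranging. Your explicit inductive verification that $m_{j}^{(l)}<\infty$ is, if anything, slightly more careful than the paper's one-line remark that ``recursively one has $E|V^{*}_{j}|^{n}<\infty$.''
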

\begin{proof} We begin with the fact that for any Poisson random variable with parameter $w>0,$ we have $E\big(\text{Poi}(w)\big)_{k}= w^{k},$ and as a result of \eqref{stirling2}
\beqn
E\big([\text{Poi}(w)]^{n}\big)&=&\sum_{k=1}^{n}{n\brace k} E\Big[\big(\text{Poi}(w)\big)_{k}\Big]\non\\
&=& \sum_{k=1}^{n}{n\brace k} w^{k}\non
\eeqn
using the expansion with the Stirling number of type $2.$ 

Using the result of Theorem \ref{T1}, the above computation leads to 
\beqn
\int_{0}^{\infty}y^{n}\widetilde{\pi}_{1}(dy)&=&\sum_{j\in S}\pi_{j}E\big([\text{Poi}(W_{j})]^{n}\big)\non\\
&=& \sum_{j\in S}\pi_{j}\sum_{k=1}^{n}{n\brace k} E\big[W_{j}^{k}\big],\non\\
&=& \sum_{j\in S}\pi_{j}\sum_{k=1}^{n}{n\brace k}E\Big[\lambda(j)\int_0^{T_j}e^{-\mu(j)(T_{j}-s)}ds+e^{-\mu(j)T_j}V^{*}_j\Big]^{k}\non
\eeqn
given  $m^{(n)}_{j}=E\big(V^{*}_j\big)^{n}<\infty$ exists. Given the assumptions of Theorem \ref{T1} and since $\mu(\cdot)$ is non-negative and strictly positive for some $i\in S$, for all $m\in\mathbb{N}$ $Ee^{-m\int_{\tau^{j}_{0}}^{\tau_{1}^{j}}\mu(X_{s})ds}<1.$ Under second condition of \eqref{mom0}, the moments  of $V^{*}_j$ can be computed recursively using the representation for $(C_{1}^j, D_{1}^j)$ in \eqref{AB}. From $V^{*}_j\stackrel{d}{=}C_{1}^j V^{*}_j+D_{1}^j,$ for any integer $0\le l\le n$, taking $f(x)=x^{l}$ in both sides of the above recurrence equation yields
$$
(V^{*}_j)^{l}\stackrel{d}{=}(C_{1}^j)^l (V^{*}_j)^{l}+\sum_{k=0}^{l-1}{l \choose k}(C_{1}^j)^k (D_{1}^j)^{l-k}(V_j^{*})^{k}.
$$
Under conditions of \eqref{mom0}, $E\big[(C_{1}^j)^{l}\big]< 1$ and $E[(C_{1}^j)^k (D_{1}^j)^{l-k}]<\infty$ for $0\le k\le l\le n$, then recursively one has $E|V^{*}_j|^{n}<\infty.$ Independence between $V^{*}_j$ and $(C_{1}^j,D_{1}^j)$ gives the following recursive relation of the  moments
\beqn
m_{j}^{(l)}:=E[(V^{*}_j)^{l}]&=&\frac{1}{1-E(C_{1}^j)^{l}}\sum_{k=0}^{l-1}{l\choose k}E\big[(C_{1}^j)^{k}(D_{1}^j)^{l-k}\big]E[(V^{*}_{j})^{k}]\non\\
&=&\frac{1}{1-E(C_{1}^j)^{l}}\sum_{k=0}^{l-1}{l\choose k}E\big[(C_{1}^j)^{k}(D_{1}^j)^{l-k}\big]m_{j}^{(k)}\non.
\eeqn
which matches identically with \eqref{mom2}. The expectation in the RHS of \eqref{mom1} can be computed using the independence of $T_j, V_{j}^{*},$ along with the values of $\{m_{j}^{(k)}:1\le k \le n\}$. 
\hfill$\square$
\end{proof}

\section{Sampling scheme}\label{SS}Theorem \ref{T1} provides a way to characterize the limiting bi-variate measure (total counts along with the environment) $$\widetilde{\pi}:=\mathcal{L}\bigg(\sum_{j\in S}^{}\delta_{U_{*}}(\{j\})\text{Poi}(W_{j}),U_{*}\bigg)$$ under semi-Markovian switching environment and some conditions, but how can we use that to simulate observations from the measure, given that the environmental state $\{U_{*}=j\}$ is known? The goal of this section is to address how to sample from the conditional limiting measure
$$\widetilde\pi(\cdot\,|\, j):=\frac{\widetilde\pi(\cdot,j)}{\pi_{j}}=P\big[\text{Poi}(W_{j})\in\cdot\big].$$

The computation of such quantities is often useful when some environmental state is largely observable. For various statistical inferences, it is important to study the properties of the distribution of $\widetilde{\pi}.$ One way to do this is to generate a sample $j$ from the known distribution $\pi$ (follows from the dynamics $X$), and then generate from $\widetilde\pi(\cdot\,|\, j).$  Moreover, if we want to make statistical inferences about rare events, such as "given the environment is set at  $j\in S,$ what will be the propensity or mean of the total number of people in the system exceeding a certain given threshold?", we need to generate samples from $\widetilde\pi(\cdot\,|\, j),$ which is  $\mathcal{L}(\text{Poi}(W_{j}))$ as shown above. 

From Theorem \ref{T1}, the distributional structure of $\mathcal{L}(W_{j})$ is identical as \newline $\mathcal{L}\big(\lambda(j)\int_0^{T_j}e^{-\mu(j)(T_{j}-s)}ds+e^{-\mu(j)T_{j}}V^{*}_j\big),$ where $T_{j}\sim \pi_{j}^{*}$ is independent with $V_{j}^{*}.$ Hence to generate one sample from $\mathcal{L}(W_{j}),$ it is enough to generate one sample from $\mathcal{L}(V^{*}_j).$

We assume the conditions of Theorem \ref{T1}. From one trajectory of $X,$ we may chop the path $(0,\tau_{n}^{j}]$ in disjoint intervals $\cup_{i=1}^{n} (\tau_{i-1}^{j},\tau_{i}^{j}]$ and from that we may construct the i.i.d samples for $i=1,\ldots,n$

$$(C^{j}_{i},D^{j}_{i})=\Big(e^{-\int_{\tau^{j}_{i-1}}^{\tau^{j}_{i}}\mu(X_{s})ds},\int_{\tau^{j}_{i-1}}^{\tau^{j}_{i}}\lambda^{}(X_{s})e^{-\int_{s}^{\tau^{j}_{i}}\mu(X_{r})dr}ds\Big),$$ that are considered as the iid samples from joint bi-variate distribution \newline $\mathcal{L}\Big(e^{-\int_{\tau^{j}_{0}}^{\tau^{j}_{1}}\mu(X_{s})ds},\int_{\tau^{j}_{0}}^{\tau^{j}_{1}}\lambda^{}(X_{s})e^{-\int_{s}^{\tau^{j}_{1}}\mu(X_{r})dr}ds\Big).$ Now generate an observation  $T^{j}$ from the distribution $\pi_{j}^{*}$ (i.e, size-biased distribution of averaged sojourn time at state $j\in S$) which is independent of $\{(C^{j}_{i},D^{j}_{i}):i=1,\ldots,n\}.$  Let $V_{(j,0)}=0$ and define recursively for  $1\leq i\leq n$,
$$V_{(j,i)}=C_{i}^{j} V_{(j,i-1)}+D_{i}^j.$$
Considering $V_{(j,n)}$ as an approximate sample for $V_{j}^{*}$, we consider $V^{}_{j,n}:=e^{-\mu(j)T_{j}}V_{(j,n)}+\lambda(j)\int_0^{T_{j}}e^{-\mu(j)(T_{j}-s)}ds,$ as an approximate sample from $\mathcal{L}(W_{j})$. The following result gives a non-asymptotic estimate of the error made with this approximation, in terms of the Wasserstein-$1$ metric. This also gives a criteria for selecting $n,$ to obtain the desired accuracy.

\begin{Proposition}\label{P3}
Suppose assumptions of Theorem \ref{T1} hold, and added to that assume further 
\beqn
E_{\pi}\lambda(\cdot)<\infty.\label{finL}
\eeqn
 Then for any $n\ge 1,$ there exists some finite $A_{1}$ and $r>0$ (both $A_{1},r$ may depend on $j$) such that,
$$\mathcal{W}_{1}\Big(\mathcal{L}(V^{}_{j,n}), \mathcal{L}(W_{j})\Big)\leq  A_{1} e^{-rn}.$$
\end{Proposition}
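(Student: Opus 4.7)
\textbf{Proof plan for Proposition \ref{P3}.} The plan is to reduce the Wasserstein bound to a contraction estimate on $\mathcal{W}_1(\mathcal{L}(V_{(j,n)}), \mathcal{L}(V^*_j))$, then obtain that contraction through a stationary backward coupling along the iid pairs $\{(C_i^j, D_i^j)\}_{i\geq 1}$. First, generate both $V_{j,n}$ and $W_j$ on the same probability space by using a common $T_j \sim \pi_j^*$ that is independent of $\{(C_i^j, D_i^j)\}_{i\geq 1}$ and of $V_j^*$. Then, directly from the definitions,
$$
V_{j,n} - W_j = e^{-\mu(j) T_j}\bigl(V_{(j,n)} - V^*_j\bigr),
$$
so that $|V_{j,n} - W_j| \leq |V_{(j,n)} - V^*_j|$ since $\mu(j) T_j \geq 0$. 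This reduces the problem to bounding a coupling of $V_{(j,n)}$ with $V^*_j$.

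Second, construct the coupling between $V_{(j,n)}$ and $V^*_j$ as follows. Let $\widetilde V^{(0)}_j$ have law $\mathcal{L}(V^*_j)$ and be independent of the iid sequence $\{(C_i^j, D_i^j)\}_{i\geq 1}$, and iterate $\widetilde V^{(n)}_j := C_n^j \widetilde V^{(n-1)}_j + D_n^j$. Because $\mathcal{L}(V^*_j)$ is the unique stationary law of the SRE driven by $(C^j_1,D^j_1)$, an inductive check gives $\widetilde V^{(n)}_j \stackrel{d}{=} V^*_j$ for every $n \geq 0$. Subtracting the defining recursion for $V_{(j,n)}$ (with $V_{(j,0)} = 0$) and telescoping yields
$$
\widetilde V^{(n)}_j - V_{(j,n)} = \Bigl(\prod_{i=1}^n C_i^j\Bigr) \widetilde V^{(0)}_j.
$$
Since $\lambda(\cdot), \mu(\cdot) \geq 0$, one has $C_i^j \in [0,1]$ and $D_i^j \geq 0$ almost surely, hence $V_{(j,n)} \geq 0$ and $\widetilde V^{(0)}_j \geq 0$. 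Combining with the previous step,
$$
|V_{j,n} - W_j| \leq \Bigl(\prod_{i=1}^n C_i^j\Bigr) \widetilde V^{(0)}_j.
$$

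Third, take expectations and use the mutual independence of $\widetilde V^{(0)}_j$ and $\{C_i^j\}_{i \geq 1}$ together with the iid property of the latter:
$$
\mathcal{W}_1\bigl(\mathcal{L}(V_{j,n}), \mathcal{L}(W_j)\bigr) \leq E|V_{j,n} - W_j| \leq \bigl(E C_1^j\bigr)^n \, E V^*_j.
$$
The factor $E C_1^j = E\!\left[e^{-\int_{\tau_0^j}^{\tau_1^j}\mu(X_s)\,ds}\right]$ lies strictly below $1$: the integrand is nonnegative, and the assumption $E_\pi \mu(\cdot) > 0$ (see Remark \ref{R2}(a)) forces $\int_{\tau_0^j}^{\tau_1^j}\mu(X_s)\,ds > 0$ with positive probability, so the exponential is strictly less than $1$ on an event of positive probability. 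Setting $r := -\log E C_1^j > 0$ yields the geometric rate $e^{-rn}$.

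Finally, the constant $A_1 := E V^*_j$ must be shown to be finite under the extra hypothesis \eqref{finL}. Taking expectations in the distributional identity $V^*_j \stackrel{d}{=} C_1^j V^*_j + D_1^j$ (all terms nonnegative) gives $E V^*_j = E D^j_1 / (1 - E C_1^j)$, so it suffices to bound $E D^j_1$. Using the pointwise estimate $D^j_1 \leq \int_{\tau_0^j}^{\tau_1^j}\lambda(X_s)\,ds$ and the renewal-reward identity $E\bigl[\int_{\tau_0^j}^{\tau_1^j}\lambda(X_s)\,ds\bigr] = E_\pi \lambda(\cdot) \cdot E|\tau_1^j - \tau_0^j|$, the finiteness of $E D^j_1$ follows from \eqref{finL} together with $E|I^j_1| < \infty$ from Assumption \ref{As0}. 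The main step that requires care is this last verification of $E V^*_j < \infty$ via the renewal-reward identity; the rest is the standard forward-coupling contraction argument for affine stochastic recurrences.
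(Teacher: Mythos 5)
Your proposal is correct in substance and reaches the same constants as the paper ($r=-\log E C_1^j$, $A_1$ essentially $EV_j^*$), but the coupling you use is genuinely different. The paper works with the backward iteration: it writes $V_{(j,n)}=\widetilde P_n(C,D)$, invokes the exchangeability identity \eqref{lhrh} to conclude $\widetilde P_n(C,D)\stackrel{d}{=}P_n(C,D)$, represents $V_j^*$ as the a.s.\ limit $P_\infty(C,D)$, and bounds $E|P_\infty-P_n|$ by factoring the tail as $\bigl(\prod_{i=1}^n C_i^j\bigr)$ times an independent copy of $P_\infty$. You instead run a forward coupling from stationarity: initialize a second chain at $\widetilde V_j^{(0)}\sim\mathcal{L}(V_j^*)$ independent of the driving sequence, iterate both chains with the same $(C_i^j,D_i^j)$, and telescope to get $\widetilde V_j^{(n)}-V_{(j,n)}=\bigl(\prod_{i=1}^n C_i^j\bigr)\widetilde V_j^{(0)}$. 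This avoids the reversal step and the infinite tail series entirely, and is arguably the cleaner and more standard contraction argument for affine recursions; the paper's route has the advantage that the objects $P_n, \widetilde P_n, P_\infty$ are already set up in \eqref{PP} and reused in the proof of Lemma \ref{L1}, so the reversal identity comes for free there.

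One step needs repair. You obtain $EV_j^*<\infty$ by ``taking expectations in $V_j^*\stackrel{d}{=}C_1^jV_j^*+D_1^j$ and solving,'' i.e.\ from $EV_j^*=EC_1^j\,EV_j^*+ED_1^j$. Since all quantities are nonnegative, this identity holds in $[0,\infty]$ and is also satisfied by $EV_j^*=\infty$ (as $EC_1^j>0$), so you cannot solve for $EV_j^*$ without first knowing it is finite --- that is precisely what you are trying to prove. The fix is the one the paper uses: bound $EV_{(j,n)}\le ED_1^j\sum_{k\ge 0}(EC_1^j)^{k-1}\le ED_1^j/(1-EC_1^j)$ uniformly in $n$ from the series representation (display \eqref{ub1}), and pass to the limit (monotone convergence or Fatou) to get the same bound for $EV_j^*$; equivalently, apply Tonelli directly to $V_j^*\stackrel{d}{=}\sum_{k\ge1}\bigl(\prod_{i=1}^{k-1}C_i^j\bigr)D_k^j$. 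Your verification that $ED_1^j<\infty$ via $D_1^j\le\int_{\tau_0^j}^{\tau_1^j}\lambda(X_s)\,ds$ and the renewal--reward identity under \eqref{finL} is exactly the paper's argument and is fine.
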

\begin{remark}\label{Re}
The condition \eqref{finL} being not very restrictive in this context, implies that $E[D_{i}^{j}]<\infty,$ which makes both $V^{}_{j,n}, V_{j}^{*}$ having finite means, justifying the use of Wasserstein-$1$ norm. If we choose other norms, then conditions weaker than \eqref{finL} may be used. 
\end{remark}
\begin{proof} Observe that ``$E_{\pi}\mu(\cdot)>0"$ is equivalent to $``P\big[\int_{\tau_{0}^{j}}^{\tau_{1}^{j}}\mu(X_{s})ds>0\big]>0,"$ where in general $\int_{\tau_{0}^{j}}^{\tau_{1}^{j}}\mu(X_{s})ds\ge 0$ a.s. Hence for any $i\in\mathbb{N},$ $E[C_{i}^{j}]<1,$ and almost surely $e^{-\int_{s}^{\tau^{j}_{i}}\mu(X_{r})dr}\le 1,\s \forall  s\in[\tau_{0}^{j},\tau_{1}^{j}).$ As a consequence, 
$$E[D_{i}^{j}]=E\Big[\int_{\tau^{j}_{i-1}}^{\tau^{j}_{i}}\lambda^{}(X_{s})e^{-\int_{s}^{\tau^{j}_{i}}\mu(X_{r})dr}ds\Big]\le E\Big[\int_{\tau^{j}_{i-1}}^{\tau^{j}_{i}}\lambda^{}(X_{s})ds\Big]=\big(E_{\pi}\lambda(\cdot)\big)E|I_{j}|<\infty.$$
Using the expression $V_{(j,n)}=\sum_{i=1}^{n}\big(\prod_{k=i+1}^{n}C_{k}^{j}\big)D_{i}^{j},$ it follows that for any $n\in \mathbb{N},$
\begin{eqnarray}
E(V_{(j,n)}) =\sum_{k=1}^{n}E(D_{i}^{j})E(C_{i}^{j})^{k-1}\le \frac{E(D_{i}^{j})}{1 -E(C_{i}^{j})}\le\frac{\big(E_{\pi}\lambda(\cdot)\big)E|I_{j}|}{1 -E(C_{i}^{j})}<\infty,\label{ub1}
\end{eqnarray}
that is a consequence of the fact that $(C_{i}^{j},D_{i}^{j})$ are iid. This upper bound will also hold for $EV_{j}^{*}.$

Take $$A_{1}:=\frac{\big(E_{\pi}\lambda(\cdot)\big)E|I_{j}|}{1 -E(C_{i}^{j})},\s r:=-\log(E(C_{i}^{j})).$$

Let $T_{j}\sim \pi_{j}^{*},$ be independent of $\{(C_{i}^{j},D_{i}^{j}):i\ge 1\}.$ Observe that, $V_{(j,n)}=\sum_{i=1}^{n}\big(\prod_{k=i+1}^{n}C_{k}^{j}\big)D_{i}^{j}= \widetilde{P}_{n}^{}(C,D)$  where $\widetilde{P}_{n}^{}(C,D),P_{n}^{j}(C,D)$ were defined in \eqref{PP}.

As a consequence of $\{(C_{i}^{j},D_{i}^{j}):i\ge 1\}$ being an iid sequence, following holds
\beqn
\mathcal{L}\Big((C_{1}^{j},D_{1}^{j}),(C_{2}^{j},D_{2}^{j}),\cdots,(C_{n}^{j},D_{n}^{j})\Big)=\mathcal{L}\Big((C_{n}^{j},D_{n}^{j}),(C_{n-1}^{j},D_{n-1}^{j}),\cdots,(C_{1}^{j},D_{1}^{j})\Big).\label{lhrh}
\eeqn

Since $\widetilde{P}_{n}^{}(C,D)$ and $P_{n}^{j}(C,D)$ are outputs of identical functions of the random variables in the LHS and RHS of \eqref{lhrh}, it follows that \[\widetilde{P}_{n}^{}(C,D)\stackrel{d}{=}P_{n}^{}(C,D)\s \text{for each}\,\, n\ge 1.\]
Hence $V_{(j,n)}= \widetilde{P}_{n}^{}(C,D)\stackrel{d}{=} P_{n}^{}(C,D).$  
From the Proof of Claim $2$ in subsection \ref{lemPerp} we have $V_{j}^{*}\stackrel{d}{=}P_{\infty}^{}(C,D),$ that satisfies the unique solution to \eqref{x=ax+b} with $(C,D)$ in \eqref{AB}. Hence, 
\begin{align*}
\mathcal{W}_{1}\big(\mathcal{L}(V^{}_{j,n}), \mathcal{L}(W_{j})\big)&=\mathcal{W}_{1}\Big(\mathcal{L}(e^{-\mu(j)T_{j}}V_{(j,n)}+G_{j}^{\mu,\lambda}(T_{j})), \mathcal{L}(e^{-\mu(j)T_{j}}V^{*}_{j}+G_{j}^{\mu,\lambda}(T_{j}))\Big)\non\\
&=\mathcal{W}_{1}\Big(\mathcal{L}(e^{-\mu(j)T_{j}}P_{n}^{}(C,D)+G_{j}^{\mu,\lambda}(T_{j})), \mathcal{L}(e^{-\mu(j)T_{j}}P_{\infty}^{}(C,D)+G_{j}^{\mu,\lambda}(T_{j}))\Big)\non\\
&\le E(|e^{-\mu(j)T_{j}}P_{n}^{}(C,D)+G_{j}^{\mu,\lambda}(T_{j})-e^{-\mu(j)T_{j}}P_{\infty}^{}(C,D)-G_{j}^{\mu,\lambda}(T_{j})|)\\
&\le E(e^{-\mu(j)T_{j}})E(|P_{\infty}^{}(C,D) - P_{n}^{}(C,D)|)\\&\le E(|P_{\infty}^{}(C,D) - P_{n}^{}(C,D)|).
\end{align*}

Note that \[P_{\infty}^{}(C,D) - P_{n}^{}(C,D)= \Big(\prod_{i=1}^{n}C_{i}^{j}\Big)\bigg[\sum_{k=n+1}^{\infty}\Big(\prod_{i=n+1}^{k-1}C_{i}^{j}\Big)D^{j}_{k}\bigg].\]

The assertion follows by observing that the two quantities in the RHS are independent and the first quantity after taking expectation yields $e^{-rn},$ while the second one after expectation is upper-bounded by $A_{1}.$
\hfill$\square$
\end{proof}

\subsection{Exceedance event computation}
Suppose we are interested in knowing the probability $\widetilde{\pi}_{1}[c,\infty)$ for some threshold $c>0$ (which is large but not large enough to use tail asymptotic type approximations), then using the definition in \eqref{pi1} it follows that 
\[\widetilde{\pi}_{1}[c,\infty)=\sum_{j\in S}\pi_{j}P[\text{Poi}(W_{j})\ge c].\]
Now for a fixed $j\in S,$ if we can generate $K$ iid samples of $\{(T^{(i)}_{j},V^{(i)}_{(j,n)}): i=1,\ldots,K\}$ using the sampling method described for certain $n,$ then clearly $P[\text{Poi}(W_{j})\ge c]$ can be estimated by $$\frac{1}{K}\sum_{i=1}^{K}1_{\Big\{\text{Poi}\big(e^{-\mu(j)T^{(i)}_{j}}V^{(i)}_{(j,n)}+\lambda(j)\int_0^{T^{(i)}_{j}}e^{-\mu(j)(T^{(i)}_{j}-s)}ds\big)\ge c\Big\}}$$ for some large $K.$  Alternatively one may use $R$-programming language to calculate the Poisson probabilities (using a command like \textit{ppois$(\cdot)$}) with rate that is generated from $\mathcal{L}(W_{j}),$ and then take the Monte-Carlo average of these probabilities (instead of above identity function) to get an estimate of $\widetilde{\pi}_{1}[c,\infty)$ for certain $c>0.$ This gives an efficient  estimator of the exceedance probability $\widetilde{\pi}_{1}[c,\infty),$ that would be difficult to find otherwise.

Consider the following two examples of an infinite server queue when the arrival and service rates are both modulated by the stochastic environment $X$. 

\subsection{Example $1$:}
Suppose $X:=(X_{1},X_{2})$ is a $S:=\{(k,10-k): k=0,\ldots,10\}$-valued semi-Markov processes. Clearly $X_{1}+X_{2}=10.$ We describe the $P$ matrix governing the transition kernel of a time-homogeneous discrete time state-wise Markov chain for $X_{1}$. Let $P=\{P_{ij}:i=0,\ldots,10\}$ be such that $P_{ij}=P[U_{k+1}=j\mid U_{k}=i],$ and $P_{ii}=0$ for all $i=0,\ldots,10.$ We specify the transition rates as follows
$$P_{i,i-1}=\frac{4i}{50-i},\,\,\,\, P_{i,i+1}=\frac{5(10 -i)}{50-i},\s  i=1,\ldots,9$$
 $P_{0,1}= P_{10,9}= 1,$ and $P_{ii}=0$ for all $i=0,\ldots,10.$ We specify the set $\mathcal{G}$ of probability distributions of sojourn times of $X_{1},$ as $\mathcal{G}_{\text{Exp}}=\{F_{ij}= \text{Exp}(50 - i), i=0,\ldots,10\}.$

\begin{figure}[h]
\centering
\includegraphics[width=16cm, height=6cm]{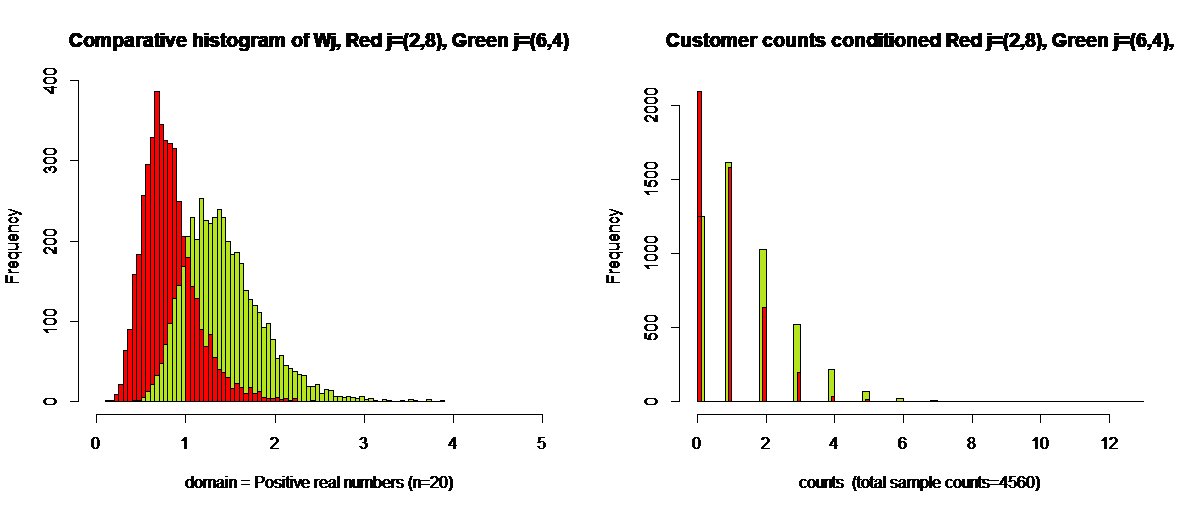}
\caption{In \textbf{Example $1$} we generated samples from mixture measure $\mathcal{L}(W_{\mb{j}})$ applying the sampling scheme for $n=20,$ for $\mb{j}=(2,8)$ and $\mb{j}=(6,4)$ and the histograms of sampled Poisson counts with the mixture $W_{\mb{j}}$ are also presented}\label{ex11}
\end{figure}

Clearly, under the given setup, $X$ is a birth-death process. Using its properties, the stationary distribution of $X$ can be expressed as $$\mu_{i}=\lim_{t\to\infty}P[X_{1}(t)=i, X_{2}(t)=10-i]={10 \choose i}\Big(\frac{5}{9}\Big)^{i}\Big(\frac{4}{9}\Big)^{10-i}.$$

Now we consider infinite server queue with arrival rate $\lambda(X_{1},X_{2})=X_{1}$ and the service rate $\mu(X_{1},X_{2})=X_{2}.$ From Theorem \ref{T1}, the conditional limiting distribution becomes (for any environment fixed at $(X_{1},X_{2})=\mb{j}:=(j,10-j)$)
\begin{equation*}
\widetilde\pi(z\,|\, \mb{j}):= \int_{\R_{+}}P\Big[\textup{Pois}(y)=z\Big]\mu^{(W)}_{\mb{j}}(dy)=\int_{\R_{+}}\Big(\frac{y^z}{z!}e^{-y}\Big)\mu^{(W)}_{\mb{j}}(dy) 
\end{equation*}
where $\mu^{(W)}_{\mb{j}}=\mathcal{L}(W_{\mb{j}}).$   Furthermore,  $\mathcal{L}(W_{\mb{j}})=\mathcal{L}(e^{-\mu(\mb{j})T_{\mb{j}}}V^{*}_{\mb{j}}+G^{\mu,\lambda}_{\mb{j}}(T_{\mb{j}}) ))$, where $\lambda(\mb{j})=j, \mu(\mb{j})=10 -j,$ and $T_{\mb{j}}$ is an exponential random variable with rate $(50-j)$ that is independent of $V^{*}_{\mb{j}}$, such that
\begin{eqnarray}
V^{*}_{\mb{j}}&\stackrel{d}{=} &e^{-\int_{\tau^{j}_{0}}^{\tau^{j}_{1}}\mu(X_{s})ds}V^{*}_{\mb{j}}+ \int_{\tau^{j}_{0}}^{\tau^{j}_{1}}\lambda^{}(X_{s})e^{-\int_{s}^{\tau^{j}_{1}}\mu(X_{r})dr}ds,\s\s V^{*}_{\mb{j}}\perp \big(X_{s}: s\in [\tau_{0}^{j},\tau_{1}^{j})\big) \non
\end{eqnarray}
Using the simulation scheme taking $n=20,$ histograms of $\widetilde\pi(z\,|\, \mb{j})$ are given below for $\mb{j}=(2,8)$ and $\mb{j}=(6,4)$ are following. It looks like $\mathcal{L}(W_{\mb{j}})$ for $\mb{j}=(2,8)$ has more peak with peak location somewhere at less than $1,$ where for $\mb{j}=(6,4)$ $\mathcal{L}(W_{\mb{j}})$ is more skewed towards right and has a smaller peak between $(1,2).$

\subsection{Example $2$:}
In the previous example, the latent process had a finite state-space. Contrary to that, we consider another example of an underlying latent process $X$ that is semi-Markovian with state dynamics $U$ being an $S=\{0,1,2,\ldots\}$-valued discrete-time Markov chain with a transition kernel $$P_{i,i+1}=\frac{\lambda}{i+1},\s P_{i,0}=1-P_{i,i+1},\s \text{for any } i\in S,$$
where $\lambda$ is an arbitrary parameter such that $0<\lambda<1$. It can be shown that the stationary distribution $\mu$ for $U$ will be Poisson$(\lambda).$ For both following cases, we consider $\lambda=1.$

Now we consider two cases of different sojourn time dynamics: Exponential and Pareto. For the first case we assume  $\mathcal{G}_{\text{Exp}}=\{F_{ij}= \text{Exp}(3i+1), i\in S\},$ and for the second case $\mathcal{G}_{\text{Pareto}}=\big\{F_{ij}= \mathcal{L}\big(\text{Pareto}(1,\alpha(i))-1\big), i\in S\big\},$ where $\alpha(i)>2,\,\,\forall i\in S$ and $\text{Pareto}(x_{m},\alpha)$ is a Pareto random variable of type $1$, with scale parameter $x_{m},$ and shape parameter $\alpha$. In the second example, the scale parameter is $1$ for all sojourn times, but then we change the location by $1,$ which means that all the sojourn times are supported on the $[0,\infty).$ The mean of the law of $i$-th sojourn time $\mathcal{L}\big(\text{Pareto}(1,a(i))-1\big)$  is $\frac{1}{\alpha(i)-1}.$

In the first \textbf{Exponential} case, we consider $\lambda(X):=1+2X,\,\, \mu(X):=X$ as the $X$ dependent arrival and service rate. Observe that $\mathcal{L}(W_{j})=\mathcal{L}(e^{-\mu(j)T_{j}}V^{*}_{j}+G^{\mu,\lambda}_{j}(T_{j}) ))$, where $\lambda(j)=1+2j, \mu(j)=j,$ and $T_{j}$ is an Exponential random variable with rate $(3j+1)$ that is independent of $V^{*}_{j}.$ Using the sampling scheme for $j=0,\,\,\&\,\, j=3$ a comparative diagram of the histogram density of the mixing measure $\mathcal{L}(W_{j})$ follows.

\begin{figure}[h]
\centering
\includegraphics[width=16cm, height=6cm]{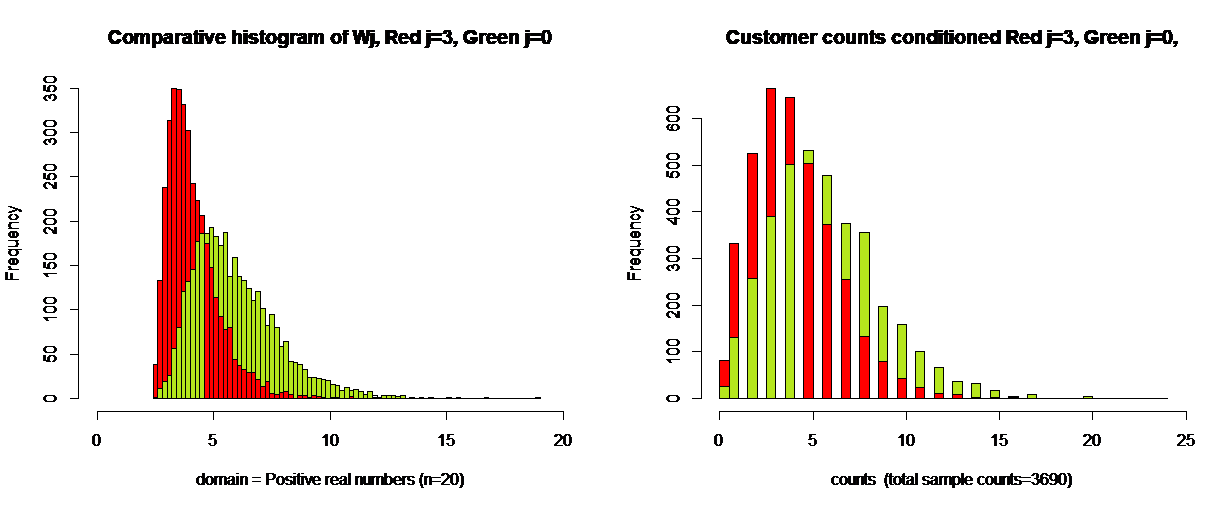}
\caption{In \textbf{Example $2$} for $\mathcal{G}_{\text{Exp}}$ we generated samples from mixture measure $\mathcal{L}(W_j)$ applying the sampling scheme (in Section \ref{SS}) for $n=20,$ for $j=3$ and $j=0$ and the histograms of sampled Poisson counts with the mixture $W_{j}$ are also presented}\label{ex21}
\end{figure}

In the second \textbf{Pareto} case (Figure \ref{ex22P}), we consider $\lambda$ and $\mu$ as before and compare the histogram density of the mixing measure $W_j$ for $j=3$ and $j=0$, with $\alpha(i) = \alpha = 2.2$ for all $i \in S$. Observe that $\mathcal{L}(W_j) = \mathcal{L}(e^{-\mu(j)T_j} V^{*}_j + G^{\mu,\lambda}_j(T_j))$, where $\lambda(j) = 1 + 2j$, $\mu(j) = j$, and $T_j$ is a Pareto random variable with rate $\alpha - 1 = 1.2$, independent of $V^{*}_j$. Hence it follows that
\[W_3 = e^{-3T_3} V^{*}_{3} +\frac{7}{3}[1-e^{-3T_{3}}], \s W_0 = V^{*}_0 + T_0 \]
where $T_0 \perp V^{*}_0,\,\, T_{3}\perp V^{*}_{3},\,\, T_0,T_3 \sim \text{Pareto}$ of type $1$ with the shape parameter $1.2$. For $\alpha = 2.2$ and $j = 0, 3$, the comparative histograms of the mixing measure $\mathcal{L}(W_j)$ are presented in the Figure \ref{ex22P}.

\begin{figure}[h]
\centering
\includegraphics[width=16cm, height=6cm]{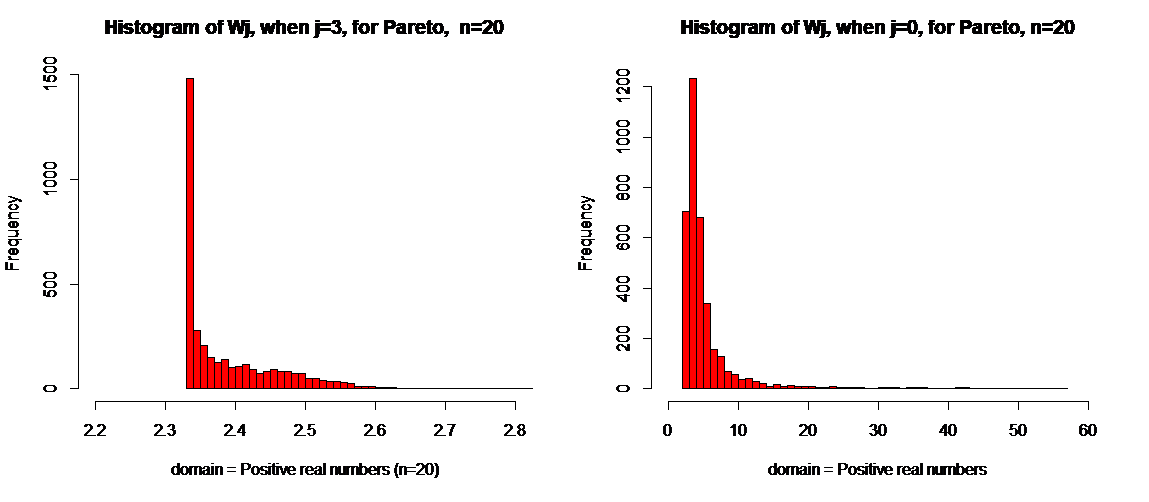}
\caption{In \textbf{Example 2} for $\mathcal{G}_{\text{Pareto}}$, samples were generated from the mixing measure $\mathcal{L}(W_j)$ using the sampling scheme for $n=20$, for $j=3$ and $j=0$. The histograms of the sampled Poisson counts with the mixture $W_j$ are also shown.}\label{ex22P}
\end{figure}

It appears that the mixing measure corresponding to $X=3$ (a rarer event than $X=0$) is much more peaked, with a narrower domain, similar to the case of $\mathcal{G}_{\text{Exp}}$. However, the nature of the difference is notable due to the Pareto sojourn times. The distribution $\mathcal{L}(W_j)$ for $j=0$ is highly right-skewed, with a heavy tail extending towards $+\infty$. Approximately 1\% of the observations, exceeding 60, were discarded in the histogram, with the maximum value of the samples reaching around $576.9$ whereas for $\mathcal{L}(W_{3})$ it is $2.915$. This is because, for $j=0$, the effect of $T_j$ on $W_j$ is linear without any exponential factor (since $\mu(0) = 0$). This is due to the additive effect of the heavy-tailed $\pi_{0}^{*}$,i.e, $W_0 = T_0 + V^{*}_0$, where $T_0$ follows a Pareto distribution with $\alpha = 1.2$. Therefore, $\mathcal{L}(W_0)$ will exhibit heavy-tailed behavior in the right tail due to $T_0$, while its behavior near the mean will be governed by $V^{*}_0$. This phenomenon would not hold for other $W_j$ for $j\neq 0,$ as the additive effect is bounded by $\frac{\lambda_{j}}{\mu_{j}}.$

In all three above cases (Example $1$ \& $2$), the distribution $\mathcal{L}(W_{j})$ for a state $j$ that is closer to the mean of $X$ tends to have more spread and less peaked. In contrast, for a rarer state $j$ that lies further from the mean, $\mathcal{L}(W_{j})$ exhibits a sharper peak and a narrower spread.

\section{Sharpness of the Assumption \ref{A0}}\label{example} We give an example where Assumption \ref{A0} does not hold and ergodicity of $Y$ will be compromised under certain parameter values. Suppose people are coming to a facility consisting of infinite workstations, and two identical servers parallelly supply software-based support to each of these infinite workstations. Whenever a person enters she is automatically assigned to a workstation. At any time only one server can support these workstations. 

Let $X_{1}$ be a $\{0,1\}$-valued continuous-time Markov chain with rate matrix $Q_{1}=\begin{pmatrix} -\lambda_{0} & \lambda_{0}\\ \lambda_{1} & -\lambda_{1}
\end{pmatrix},$ indicating the activity status of the two servers. $\{X_{1}(t)=0\}$ implies at time $t>0,$ the first server is active while the second server is dormant/resting, and $\{X_{1}(t)=1\}$ implies the reverse.

$X_{2}$ is a $\{0,\ldots, k\}$ valued stochastic process, denoting the allocated bandwidth level of the active server, where level $0$ means ``no bandwidth allocated" and level $k$ means ``full bandwidth allocated." If the active server is allocated $x$ level of bandwidth, then the other server is allocated $(k-x)$ bandwidth level, i.e, the total bandwidth level of both servers will add up to $k$. Whenever one person's job is finished and she leaves the system, instantaneously the allocated bandwidth level of the active server drops by one which is re-allocated to the in-active server. The service rate of all workstations is identically assumed to be proportional to the allocated bandwidth level of the active server (but the multiplicative constants may differ depending on the server status $X_{1}$), i.e if we denote the background switching environment $X=(X_{1}, X_{2})$ then the service rate can be expressed as
\beqn
\mu(x):= \big[(1-x_{1}) q_{1}+ x_{1}q_{2}\big]x_{2}1_{\{x_{2}>0\}}, \s\s x=(x_{1},x_{2})\in \{0,1\}\times\{0,\ldots,k\}\label{serv}
\eeqn
for some constants $q_{1},q_{2}\in\R_{+}.$  Jointly $(X_{1},X_{2},Y)$ is a $\{0,1\}\times\{0,\ldots,k\}\times \mathbb{N}$-valued continuous-time Markov chain with rate function $Q,$ which is specified by the following rates. 

For any $(x_{1},x,y)\in \{0,1\}\times\{0,\ldots,k\}\times \mathbb{N}$
\beqn
&(0,x,y)\to (1,k-x,y)& \s\text{ with rate,}\s \lambda_{0},\,\,\non\\
&(1,x,y)\to (0,k-x,y)& \s\text{ with rate,}\s \lambda_{1},\,\,\non\\
&(x_{1},x_{},y)\to(x_{1},x_{},y+1)& \s\text{ with rate,}\s \lambda, \non\\
&(0,x,y)\to (0,x-1,y-1)&\s\text{ with rate,}\s q_{1}xy,\non\\
&(1,k-x,y)\to (1,k-x-1,y-1)&\s\text{ with rate,}\s q_{2}(k-x)y.\non
\eeqn

Marginally the dynamics of $X_{1}$ will be governed by a continuous-time Markov chain with rate $Q_{1}.$ Whenever the value of $X_{1}$ changes, $X_{2}$ will change its value from $x_{2}\to k-x_{2}$ for some $x_{2}\in\{0,\ldots,k\}.$ The dynamics of $Y$ for any $t\ge 0,$ marginally can be described as
\beqn
Y_{t} &\stackrel{d}{=}&  Y_{0}  + N_{1}(\lambda t)  - N_{2}\Big(\int_{0}^{t}Y_{s-}\big[(1-X_{1}(s)) q_{1}+ X_{1}(s)q_{2})\big]X_{2}(s) 1_{\{X_{2}(s)>0\}}ds\Big)\label{YX2}
\eeqn
where $(N_{i}: i=1,2)$ are independent unit rate Poisson processes. Marginally $(Y_{s}:s\ge 0)$ behave identically with \eqref{inftysq} with $\lambda(\cdot)=\lambda$ (a constant) and $\mu(\cdot)$ as defined in \eqref{serv}. The marginal evolution of $(X_{2}(s):s\ge 0)$ depends on $X_{1},$ as well as on the feedback from $Y$, as it reduces by one when one customer leaves the system after her job is done. Hence Assumption \ref{A0} is invalid here for $X=(X_{1},X_{2})$ and $Y$. The following result shows that for a certain choice $k,\lambda_{0},\lambda_{1}$ if the arrival intensity rate $\lambda$ is more than a threshold, then $Y$ will be transient. 

\begin{Proposition}\label{P1}
If $k<\frac{\lambda}{2}\Big(\frac{1}{\lambda_{1}}+\frac{1}{\lambda_{0}}\Big),$ then the process $Y:=(Y_{s}:s\ge 0)$ is transient,
irrespective of any other parameters.
\end{Proposition}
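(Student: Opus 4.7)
The core observation is that the departure mechanism is throttled: during any single sojourn of $X_1$ the bandwidth $X_2$ can be depleted at most from its entering value down to $0$, and since $X_2\in\{0,\ldots,k\}$ at most $k$ customers can leave the system in that sojourn. Thus the cumulative number of departures grows no faster than a constant times the renewal count of $X_1$, while arrivals keep accumulating at rate $\lambda$; the inequality on $k$ is exactly the condition that makes the arrival rate beat this departure budget.

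\emph{Step 1 (per-sojourn departure bound).} Let $M_t$ denote the cumulative number of departures in $[0,t]$ and let $K_t$ be the number of jumps of $X_1$ in $[0,t]$. Every departure strictly decrements $X_2$ by one, and $X_2$ is only replenished (via $x\mapsto k-x$) at a jump of $X_1$. Combined with $X_2\le k$, this yields almost surely
\[
M_t\;\le\; k\,(K_t+1),\qquad t\ge 0,
\]
where the $+1$ accounts for the incomplete sojourn containing $t$. I would make this rigorous by tracking $X_2$ on each inter-jump interval of $X_1$: on such an interval $X_2$ can only decrease, starts at a value bounded by $k$, and cannot become negative, so at most $k$ departures occur.

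\emph{Step 2 (renewal asymptotics for the environment).} Since $X_1$ is an irreducible two-state continuous-time Markov chain with rates $\lambda_0,\lambda_1$, its successive sojourns form an alternating renewal sequence whose typical full cycle has mean length $\lambda_0^{-1}+\lambda_1^{-1}$ and contains exactly two jumps. The strong law for renewal processes then gives
\[
\frac{K_t}{t}\;\convas\;\frac{2}{\lambda_0^{-1}+\lambda_1^{-1}}\qquad\text{as }t\to\infty.
\]
Independently, the strong law for the unit-rate Poisson process gives $N_1(\lambda t)/t\convas\lambda$.

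\emph{Step 3 (conclusion).} Using the representation $Y_t=Y_0+N_1(\lambda t)-M_t$ from \eqref{YX2}, Steps 1--2 yield almost surely
\[
\liminf_{t\to\infty}\frac{Y_t}{t}\;\ge\;\lambda - \frac{2k}{\lambda_0^{-1}+\lambda_1^{-1}}.
\]
The hypothesis $k<\frac{\lambda}{2}\bigl(\lambda_0^{-1}+\lambda_1^{-1}\bigr)$ is exactly the condition that makes this lower bound strictly positive, whence $Y_t\to\infty$ almost surely. Since every state $(x_1,x_2,y)$ of the joint chain is then visited only finitely often, $Y$ is transient.

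The main obstacle is the sample-path bookkeeping in Step 1: one has to verify carefully that a switch $X_2\mapsto k-X_2$ occurring at a jump of $X_1$ does not smuggle extra service capacity into the same sojourn. This is precisely why I organize the accounting per sojourn of $X_1$ rather than per full cycle; once that per-sojourn bound is established, the rest is a routine combination of the renewal SLLN with the SLLN for Poisson processes.
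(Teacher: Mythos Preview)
Your proof is correct and follows essentially the same route as the paper: both hinge on the observation that at most $k$ departures can occur in any single sojourn of $X_1$, and then compare this departure budget to the Poisson arrival rate via a law-of-large-numbers argument. Your pathwise bound $M_t\le k(K_t+1)$ per sojourn is a slightly cleaner packaging than the paper's per-cycle expected-increment computation, but the underlying idea is identical.
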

\begin{proof}
The marginal dynamics of $X_{1}$ will be a stationary Markov chain with regenerative renewal property. We track what happens for the dynamics of $Y$ in one regenerating cycle of $X_{1}$ in $\{1\to 0, 0\to 1\}$ or in $(\tau_{0}^{1},\tau_{1}^{1}]$. Observe that during the sojourn at $\{X_{1}=1\}$ in $(\tau_{0}^{1},\tau_{1}^{1}],$  at most $k$ people can leave the system, since the bandwidth can drop at most $k$- level and once the bandwidth is $0,$ then all service stops until $X_{1}$ switches its value from $1\to 0$. Similarly while $X_{1}=0$ in that regenerating interval, at most $k$ people can leave the system.

Moreover the expected total people arriving in the interval $(\tau_{0}^{1},\tau_{1}^{1}]$ is more than $2k$, since
\beqn
E\big[Y_{\tau_{1}^{1}}-Y_{\tau_{0}^{1}}\big]&\ge& E\Big[\int_{\tau_{0}^{1}}^{\tau_{1}^{1}}N_{1}(\lambda dt)\Big] - 2k\non\\
&=&\lambda\Big(\frac{1}{\lambda_{1}}+\frac{1}{\lambda_{0}}\Big)-2k>0.\non
\eeqn
It follows that using above arguments $E\big[Y_{\tau_{i}^{1}}-Y_{\tau_{i-1}^{1}}\big]\ge \lambda\Big(\frac{1}{\lambda_{1}}+\frac{1}{\lambda_{0}}\Big)-2k>0$ holds for any $i\ge 1.$ Hence for every $n\ge 1$, $$Y_{\tau_{n}^{1}}-Y_{\tau_{0}^{1}}=\sum_{i=1}^{n}(Y_{\tau_{i}^{1}}-Y_{\tau_{i-1}^{1}})$$ will be the partial sum of increments, each having a strictly positive mean. By renewal law of large number, almost surely following holds
$$\liminf_{t\to\infty}\frac{Y_{t}}{t}>\lambda\Big(\frac{1}{\lambda_{1}}+\frac{1}{\lambda_{0}}\Big)-2k>0,$$
proving that $Y$ is transient. 
\hfill$\square$
\end{proof}
\section{Conclusion and future directions} \label{Conclu}

A framework for analyzing the steady-state behavior of an infinite-server queue is presented, where both the arrival and service rates are modulated by a stochastic environment that takes values in a countable (possibly infinite) set $S$ and exhibits regenerative-renewal property. Although we focus on a particular type of semi-Markov process (satisfying Assumption \ref{As0}) as an example, this framework can be extended to any other regenerative process. As $t \to \infty$, the system converges weakly to a mixture of Poisson-type distribution, where the mixing measure is explicitly expressed as an affine transformation of a stochastic recurrence equation (SRE) of the form $X \stackrel{d}{=} CX + D$, with $X$ being independent of $(C, D)$. Additionally, an approximate sampling scheme is proposed from that mixing measure, leveraging the explicit steady-state representation, and providing some convergence diagnostics to evaluate the accuracy of the approximation. We conclude by outlining some open questions and directions for future research.

\begin{enumerate}[(a)]
\item Can this framework be extended to more general regime processes while preserving the regenerative renewal structure? Partial insights from prior work on infinite-memory processes (e.g., \cite{berbee1987chains}, \cite{graham2021regenerative}) indicate that, under some restrictive conditions, the regenerative-renewal property of the underlying environment may still hold in cases of linear Hawkes process or other infinite-memory chains. However, applying such processes within our context requires a distributional estimate of the asymptotic residual time,
\[
\mathcal{L}\Big(t-\tau^{j}_{g_{t}^{j}} \mid X_{t} = j \Big),
\]
as well as developing a corresponding version of Lemma \ref{lem01} for such processes. These derivations may not be as explicit as it is in the semi-Markov setting discussed here, but they will be addressed in future.

\item As mentioned in Remark \ref{R0}, the renewal regenerative structure of $X$ will be absent if Assumption \ref{As0}(c) does not hold. Can we have any alternative methodology to circumvent this or at least have some closed-form limit results in such non-regular cases?
\item Is it possible to model feedback from $Y$ to $X$ in a queueing framework, where specific quantitative results on the long-term behavior can be determined? Alternatively, how can the transient behavior of the model described in Section \ref{example} be analyzed in a general formulation, perhaps using methods like diffusion approximation or some other techniques?
\end{enumerate}

\section{Proof of Theorem \ref{T1}}\label{proof}
\begin{proof}
Define the stochastic process $(\Phi,I^{(\lambda,\mu)}):=\Big((\Phi_{t}^{(\mu)},I_{t}^{(\lambda,\mu)}):t\ge 0\Big)$ such that
\beqn
\Phi_{t}^{(\mu)}=e^{-\int_{0}^{t}\mu(X_{r})dr},\s I_{t}^{(\lambda,\mu)}=\int_{0}^{t}\lambda^{}(X_{s})e^{-\int_{s}^{t}\mu(X_{r})dr}ds.\label{phiI}
\eeqn

We proceed by showing the following observation, which is a weak representation of $Y_{t}$ conditioned on the path of $X$ up to time $t$ :
\begin{equation}
\mathcal{L}(Y_{t}|\mathcal{F}^{X}_{t}) = \text{Bin}(Y_{0},\Phi_{t}^{(\mu)})\otimes \text{Poi}(I_{t}^{(\lambda,\mu)})\label{infin}
\end{equation}
using the notation \eqref{phiI} where $\otimes$ denotes the convolution operator.

Consider a person who arrives in the system at a time instant $u\in[0,t]$. The probability that this person will still be in the system at time $t$ can be expressed as:
\begin{equation}
P\left[N_{2,i}\left(\int_{u}^{t}\mu(X_{s})ds\right)=0\right] = e^{-\int_{u}^{t}\mu(X_{s})ds}.\label{MMinfty1}
\end{equation}

Next, we divide the total count of people $Y_{t}$ into two groups: $Y_{1}(t)$ and $Y_{2}(t)$. $Y_{1}(t)$ represents the number of people who are in the system at time $t$ out of the initial count $Y_{0}$ present in the system. $Y_{2}(t)$ represents the count of people who arrive in the system in the interval $(0,t)$ and still remain in the system at time $t$.

Conditioned on $\mathcal{F}^{X}_{t}$, both $Y_{1}$ and $Y_{2}$ are independent as $Y_{1}$ can be determined from the initial count $Y_{0}$, while $Y_{2}$ is determined by the independent arrival process $N_{1}(\cdot)$. Conditioned on $\mathcal{F}^{X}_{t}$, $Y_{1}(t)$ is interpreted as the number of people out of the initial count $Y_{0}$ who will remain in the system at time $t$. Thus, $Y_{1}(t)$ follows a binomial distribution:
\begin{equation}
\mathcal{L}(Y_{1}(t)| \mathcal{F}^{X}_{t}) = \text{Bin}(Y_{0},e^{-\int_{0}^{t}\mu(X_{s})ds})=\text{Bin}(Y_{0},\Phi_{t}^{(\mu)}).\label{Z1dist}
\end{equation}

Now focusing on $Y_{2}(t),$ we will show that it follows a Poisson distribution:
\begin{equation}
\mathcal{L}(Y_{2}(t)| \mathcal{F}^{X}_{t}) = \text{Poi}\left(\int_{0}^{t}\lambda(X_{s})e^{-\int_{s}^{t}\mu(X_{r})dr}ds\right).\label{infin2}
\end{equation}

Consider the total count of people arriving in the system in the interval $(0,t)$, denoted by $A_{t}$. Conditioned on $\mathcal{F}^{X}_{t}$, $A_{t}$ follows a Poisson distribution with mean $\int_{0}^{t}\lambda(X_{s})ds$. Arrival instances conditioned on $\mathcal{F}^{X}_{t}$ and $A_{t}$ are distributed as iid realizations from the density function $\frac{\lambda(X_{\cdot})}{\int_{0}^{t}\lambda(X_{u})du}$.

For any person arriving in the interval $(0,t)$, the conditional probability of being in the system at time $t$ is given by:
\[
\widetilde{P}_{t} = \frac{\int_{0}^{t}\lambda(X_{u})e^{-\int_{u}^{t}\mu(X_{s})ds}du}{\int_{0}^{t}\lambda(X_{u})du},
\]
where the exponential term represents the probability of a person staying in the system at time $t$ given that the person arrived at time $u$ (using \eqref{MMinfty1}).

By thinning of Poisson processes, we can multiply $\widetilde{P}_{t}$ with $\int_{0}^{t}\lambda(X_{s})ds$ to obtain \eqref{infin2}.

Finally, we note that the independence of people present at time $0$ and those who arrived afterward, given $\mathcal{F}^{X}_{t}$, implies the validity of \eqref{infin}.

Now we finish part (a) of the proof. Observe that as a consequence of \eqref{infin} unconditionally for any event $A\in\mathcal{B}(\R),$ $$P[Y_{t}\in A]=\int_{(0,1]\times \R_{\ge 0}}P\big[\text{Bin}(Y_{0},u_{1})+ \text{Poi}(u_{2})\in A\big]P_{(\Phi_{t}^{(\mu)},I_{t}^{(\lambda,\mu)})}(du_{1},du_{2}),$$
where $P_{(\Phi_{t}^{(\mu)},I_{t}^{(\lambda,\mu)})}(\cdot,\cdot)$ is a joint density of $(\Phi_{t}^{(\mu)},I_{t}^{(\lambda,\mu)})$.
Now we state the main Lemma of this article that will form the basis of the limiting measure.
\begin{lemma}\label{L1}
Under assumptions of Theorem \ref{T1}, the joint limiting measure of $(\Phi_{t}^{(\mu)},I_{t}^{(\lambda,\mu)},X_{t})$ can be given as 
$$(\Phi_{t}^{(\mu)},I_{t}^{(\lambda,\mu)},X_{t})\stackrel{d}{\to}\Big(0,\sum_{j\in S}\delta_{U_{*}}(\{j\})W_{j},U_{*}\Big)\s \text{as} \s t\to\infty,$$
where $U_{*}\sim \pi,$ and for each $j\in S,$ $W_{j}$ is defined in \eqref{mixture}.
\end{lemma}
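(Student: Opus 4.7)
The plan is to establish the joint convergence of $(\Phi_t^{(\mu)}, I_t^{(\lambda, \mu)}, X_t)$ in three stages: first obtain $\Phi_t^{(\mu)} \to 0$ from ergodicity; then, conditionally on $\{X_t = j\}$, decompose $I_t^{(\lambda, \mu)}$ into a contribution from the completed past regeneration cycles of $X$ at state $j$ (governed by the SRE \eqref{AB}) and a contribution from the partially completed current sojourn at $j$ (governed by the size-biased residual distribution $\pi_j^*$); finally, glue the two pieces together using the regenerative independence in Proposition \ref{Psemi1}.

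For the first coordinate, since $X$ is ergodic with invariant measure $\pi$ and $E_\pi \mu(\cdot) > 0$, the ergodic theorem for semi-Markov processes (which follows from the renewal regenerative structure under Assumption \ref{As0}) yields $\frac{1}{t}\int_0^t \mu(X_r)\,dr \to E_\pi\mu(\cdot) > 0$ almost surely, hence $\Phi_t^{(\mu)} \to 0$ a.s., which then combines with the remaining coordinates via Slutsky.

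For the second coordinate I would work conditionally on $\{X_t = j\}$, noting that $X_s = j$ on $[\tau_{g_t^j}^j, t]$. Splitting the integral at $\tau_{g_t^j}^j$ and factoring the exponential kernel gives
\begin{equation}
I_t^{(\lambda, \mu)} = e^{-\mu(j) A_j(t)}\, I_{\tau_{g_t^j}^j}^{(\lambda, \mu)} + G_j^{\mu, \lambda}\big(A_j(t)\big),
\end{equation}
with $A_j(t) := t - \tau_{g_t^j}^j$ and $G_j^{\mu, \lambda}$ as in \eqref{Gfun}. Along the regeneration sequence, $I_{\tau_n^j}^{(\lambda, \mu)}$ satisfies the SRE $Z_n = C_n^j Z_{n-1} + D_n^j$, where $\{(C_n^j, D_n^j)\}_{n \ge 1}$ is iid with the law \eqref{AB} by the regenerative property. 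Conditions \eqref{condT1} are precisely the specialization of \eqref{sre_conv_conds}, so $Z_n \stackrel{d}{\to} V_j^*$; to transfer this convergence to the random index $g_t^j$ (with $g_t^j / t \to 1/E|I_1^j|$ by the elementary renewal theorem), I would verify Anscombe's contiguity condition \eqref{Anscombe} for the iterates, using the contraction $E[C_1^j] < 1$ to control the oscillation $|Z_m - Z_n|$ for $|m - n|$ small relative to $n$, yielding $I_{\tau_{g_t^j}^j}^{(\lambda, \mu)} \stackrel{d}{\to} V_j^*$.

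For the remaining summand and the joint limit, conditioning on $\{X_t = j\}$ forces $t$ to lie inside the current $j$-sojourn, so the standard size-biasing argument (spread density $y f_j(y)/m_j$, uniform position within) yields $A_j(t) \stackrel{d}{\to} T_j \sim \pi_j^*$ as specified in \eqref{pi_j^*}. Proposition \ref{Psemi1}(c), applied with the $\mathcal{H}_i$-filtration split at index $g_t^j$, shows that conditioned on $\{X_t = j\}$ the past-cycle functional $I_{\tau_{g_t^j}^j}^{(\lambda, \mu)}$ and the current-cycle backward time $A_j(t)$ are asymptotically independent. Combining via continuous mapping together with the marginal $P(X_t = j) \to \pi_j$ yields $\mathcal{L}(I_t^{(\lambda, \mu)} \mid X_t = j) \to \mathcal{L}(e^{-\mu(j) T_j} V_j^* + G_j^{\mu, \lambda}(T_j)) = \mathcal{L}(W_j)$ with $T_j \perp V_j^*$, which gives the mixture representation over $U_* \sim \pi$. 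The hardest step will be the joint Anscombe--regeneration argument: one must simultaneously handle the stopped index $g_t^j$ and the conditional independence under $\{X_t = j\}$, and these cannot be separated cleanly because $g_t^j$ depends on the same cycle-length data that drives the SRE iterates $(C_n^j, D_n^j)$.
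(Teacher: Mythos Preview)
Your overall architecture---split $I_t^{(\lambda,\mu)}$ at $\tau_{g_t^j}^j$, identify the SRE driving $I_{\tau_n^j}^{(\lambda,\mu)}$, combine with the size-biased backward residual $A_j(t)$---matches the paper's. The gap is in the mechanism you propose for the random-index transfer and the asymptotic independence.

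\textbf{Anscombe fails for the backward SRE.} The iterates $Z_n=I_{\tau_n^j}^{(\lambda,\mu)}$ are the \emph{backward} sums $\widetilde P_n^j=\sum_{k=1}^n\big(\prod_{i=k+1}^n C_i^j\big)D_k^j$, which converge in distribution but not in probability: already $Z_{n+1}-Z_n=(C_{n+1}^j-1)Z_n+D_{n+1}^j$ has a nondegenerate limit law, so condition \eqref{Anscombe} cannot hold for any $\delta>0$ once $n\delta\ge 1$. The contraction $E[C_1^j]<1$ makes the chain \emph{mix} fast, which is the opposite of small oscillation. The paper's Step~1 instead uses a reversal trick: since $g_t^j$ depends on $(C_i^j,D_i^j)_{i\le g_t^j}$ only through the exchangeable cycle lengths, one has $\widetilde P_{g_t^j}^j\stackrel{d}{=}P_{g_t^j}^j$ conditionally on $\{X_0=i,X_t=j\}$, where $P_n^j=\sum_{k=1}^n\big(\prod_{i=1}^{k-1}C_i^j\big)D_k^j$ is the \emph{forward} perpetuity. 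This one converges a.s.\ to $P_\infty^j\stackrel{d}{=}V_j^*$, so the random-index issue disappears.

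\textbf{Proposition \ref{Psemi1}(c) does not give the independence you need.} Both $I_{\tau_{g_t^j}^j}^{(\lambda,\mu)}$ and $A_j(t)=t-\tau_{g_t^j}^j$ are $\mathcal F^X_{\tau_{g_t^j}^j}$-measurable; neither lives in $\sigma\{\mathcal H_i:i\ge g_t^j+2\}$, and they are genuinely coupled through the cycle-length data defining $g_t^j$. The paper handles this with a separate decoupling result, Lemma \ref{lem01}: for any $\mathcal F^X_{\tau_{g_t^j}^j}$-adapted $\mathcal J_t$ with $\mathcal J_t\stackrel{d}{\to}\mathcal J_\infty$ \emph{and} $\mathcal J_t-\mathcal J_{t-\epsilon(t)}\stackrel{P}{\to}0$ for some $\epsilon(t)=o(t)\to\infty$, one gets $P[\mathcal J_t\in A\mid A_j(t)>x,\,X_t=j]\to P[\mathcal J_\infty\in A]$. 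The stability hypothesis $\mathcal J_t-\mathcal J_{t-\epsilon(t)}\stackrel{P}{\to}0$ is exactly what the forward sum $P_{g_t^j}^j$ satisfies (being a.s.\ Cauchy) and the backward iterate $\widetilde P_{g_t^j}^j$ does not---so the reversal is not a convenience but the step that makes the decoupling lemma applicable. Your closing remark correctly flags this joint step as the hardest; the missing ingredient is the forward/backward exchangeability identity, not Anscombe.
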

(The proof of Lemma \ref{L1} is given in the next section)

Clearly the Binomial component in \eqref{infin} $$ \text{Bin}(Y_{0},\Phi_{t}^{(\mu)})\stackrel{P}{\to}0,$$ under unconditional $t\to\infty$ limit as $Y_{0}$ is fixed. Hence by Slutsky's theorem 
\begin{eqnarray}
\lim_{t\to\infty}\mathcal{L}(Y_{t},X_{t})&\stackrel{w}{=}&\lim_{t\to\infty}\mathcal{L}\big(\text{Poi}\big(I_{t}^{(\lambda,\mu)}\big), X_{t}\big)\non\\
 &\stackrel{w}{\to}&\Big(\sum_{j\in S}\delta_{U_{*}}(\{j\})\text{Poi}(W_{j}), U_{*}\Big)\s \text{as} \s t\to\infty\non
\end{eqnarray}
proving the assertion of the theorem. 
\hfill$\square$
\end{proof}

\section{Set-up for the proof of Lemma \ref{L1}}We begin with some notations $\sum_{i=j}^k a_i=0$ and $\prod_{i=j}^k a_i=1$ if $j>k$ for any $a_i$. Let $e^{-\int_{s}^{t}\mu(X_{r})dr}$ be denoted by $\Phi(s,t)$ and by this notation $\Phi_{t}=\Phi(0,t).$ Given two functions $\lambda(\cdot),\mu(\cdot): S\to \R$, we define some random variables $\big\{\big(C_{i}^{j},D_{i}^{j}\big):i\ge 1\big\}, \big(C_{0}^{j},D_{0}^{j}\big)$ as
\beqn
(C_{i}^{j},D_{i}^{j})&:=&\bigg(e^{-\int_{\tau_{i-1}^{j}}^{\tau_{i}^{j}}\mu(X_{s})ds},\int_{\tau_{i-1}^{j}}^{\tau_{i}^{j}}\lambda(X_{s})e^{-\int_{s}^{\tau_{i}^{j}}\mu(X_{r})dr}ds\bigg)\,\,\,\,\, \forall i\ge 1,\s\text{and}\label{LKintro1}\\
(C_{0}^{j},D_{0}^{j})&:=&\bigg(e^{-\int_{0}^{\tau_{0}^{j}}\mu(X_{s})ds},\int_{0}^{\tau_{0}^{j}}\lambda(X_{s})e^{-\int_{s}^{\tau_{0}^{j}}\mu(X_{r})dr}ds\bigg).\label{LKintro2}
\eeqn
Note that if $X_{0}=j$, then $\tau_{0}^{j}=0$ and $(C^{j}_{0},D^{j}_{0})=(1,0)$. $(I^{j}_{k})_{k\ge 1}$ represents the sequence of renewal cycles as an i.i.d. sequence and therefore also $(C^{j}_{i},D^{j}_{i})_{i=1}^{\infty}$ is an i.i.d. sequence under Assumption \ref{As0}. However, for fixed $t>0,$ $(C^{j}_{i},D^{j}_{i})_{i=1}^{g^{j}_{t}}$ is not an i.i.d. sequence since $g^{j}_{t}$, defined in \eqref{g_{t}}, is a renewal time which depends on the sum of all renewal cycle lengths before time $t$.

Given an arbitrary sequence of iid random variables $\{(C^{j}_{i},D^{j}_{i}):i\ge 1\}$ having common law $\mathcal{L}(C_{1}^{j},D_{1}^{j})$ (for simplicity we denote $\mathcal{L}(C,D)$), define the random variables $\big\{\big(P^{}_{n}(C,D),\widetilde{P}^{}_{n}(C,D)\big):n\ge 1\big\}$ as following 
\beqn
P^{}_{n}(C,D):=\sum_{k=1}^{n}\bigg(\prod_{i=1}^{k-1}C_{i}^{j}\bigg)D_{k}^{j},\s\s\widetilde{P}^{}_{n}(C,D):=\sum_{k=1}^{n}\bigg(\prod_{i=k+1}^{n}C_{i}^{j}\bigg)D_{k}^{j}.\label{PP}
\eeqn
Further note that in general $\{\widetilde{P}^{}_{n}(C,D):n\ge 1\}$ is Markovian and have a random coefficient autoregressive process type evolution $$\widetilde{P}^{}_{n+1}(C,D)=C_{n+1}^{j}\widetilde{P}^{}_{n}(C,D)+D_{n+1}^{j},\s\text{ with }\widetilde{P}^{}_{n}(C,D)\ci \big(C_{n+1}^{j},D_{n+1}^{j}\big),$$ while the sequence $\{P^{}_{n}(C,D):n\ge 1\}$ is not Markovian but admits the following representation $$P^{}_{n+1}(C,D)=P^{}_{n}(C,D)+\Big(\prod_{i=1}^{n}C_{i}^{j}\Big)D_{n+1}^{j}.$$  Observe that $\{(C_{i}^{j},D_{i}^{j}):i\ge 1\}$ are iid under Assumption \ref{As0}. For notational simplicity, by $P_{g_{t}^{j}}^{j},\widetilde{P}_{g_{t}^{j}}^{j}$ we denote $P_{g_{t}^{j}}^{}(C_{i}^{j},D_{i}^{j}),\widetilde{P}_{g_{t}^{j}}^{}(C_{i}^{j},D_{i}^{j})$ respectively in \eqref{PP}. Define $(L_{t})_{t\ge 0}$ as a process and $(Q^{(1)}_{t},Q^{(2)}_{t})_{t\ge 0}$ as an $(\mathcal{F}_{t}^{X})_{t\ge 0}$-adapted process defined respectively as
\beqn
L_{t}:=\big(L_{t}^{(1)}, L_{t}^{(2)}\big):=\bigg(\bigg(\prod_{k=1}^{g_{t}^{j}}C^{j}_{k}\bigg)^{} C^{j}_{0},\s\bigg(\prod_{k=1}^{g_{t}^{j}}C^{j}_{k}\bigg)^{} D^{j}_{0}+P^{j}_{g_{t}^{j}}\bigg)\label{Rt}
\eeqn
and 
\beqn
 (Q^{(1)}_{t},Q_{t}^{(2)})&:=&\bigg(e^{ -\mu(j)(t-\tau_{g_{t}^{j}}^{j})}L_{t}^{(1)},\s G_{j}^{\mu,\lambda^{}}\big(t-\tau_{g_{t}^{j}}^{j}\big)+ e^{-\mu(j)(t-\tau_{g_{t}^{j}}^{j})}L_{t}^{(2)}\bigg). \label{joint}
\eeqn
 From aforementioned definitions, both $L_{t}$ and $(Q^{(1)}_{t},Q_{t}^{(2)})$ depend on $j\in S$ but for notational simplicity $j$ is omitted. 

By the notation $\mathcal{F}^{X}_{\tau_{g^{j}_{t}}^{j}}$, we define the sigma field generated by the class of sets $\{A: A\cap \{g_{t}^{j}=n\} \in \mathcal{F}^{X}_{\tau_{n}^{j}}\}$. Crucial elements of proof of Lemma \ref{L1} rely on the following lemma concerning the asymptotic conditional independence of $t - \tau_{g_{t}^{j}}^{j}$ and a $\Big\{\mathcal{F}^{X}_{\tau_{g^{j}_{t}}^{j}}\Big\}_{t \ge 0}$-measurable $\mathbb{R}^{}$-valued process $(\mathcal{J}_{t}: t \ge 0)$, conditioned on $\{X_{t} = j\}$. We present Lemma \ref{lem01} separately, as it is useful for eliminating the conditioning on $X_{t} = j$ in the limit as $t \to \infty$, particularly when the main event concerns a functional of $X$, which exhibits a marginal weak limit with certain properties (see \eqref{key}).

\begin{lemma}\label{lem01} 
Suppose $X$ is a semi-Markov process satisfying Assumption \ref{As0}. For $j \in S,$ suppose $(\mathcal{J}_{t}: t \ge 0)$ is a $\Big\{\mathcal{F}^{X}_{\tau_{g^{j}_{t}}^{j}}\Big\}_{t \ge 0}$-measurable, $\mathbb{R}^{}$-valued process such that there exists a random variable $\mathcal{J}_{\infty}$ and an increasing deterministic function $\epsilon(\cdot): \mathbb{R}_{\ge 0} \to \mathbb{R}_{\ge 0}$, satisfying the following conditions:
\beqn
\mathcal{J}_{t}\stackrel{d}{\to} \mathcal{J}_{\infty}, \quad \frac{\epsilon(t)}{t} \to 0, \,\, \epsilon(t) \to \infty,  \quad \text{and} \quad \mathcal{J}_{t} - \mathcal{J}_{t-\epsilon(t)} \stackrel{P}{\to} 0,\quad \text{as}  \,\, t \to \infty. \label{key}
\eeqn
 Then, for any $A\in\mathcal{B}(\R^{})$ such that $P[\mathcal{J}_{\infty}\in \partial A]=0$, as $t \to \infty$
\[\lim_{t\to\infty}P\Big[\mathcal{J}_{t} \in A\mid t-\tau_{g_{t}^{j}}^{j}>x , X_{t}=j\Big] =P\Big[\mathcal{J}_{\infty}\in A\Big],\]
for any $x\in\R_{\ge 0}.$
\end{lemma}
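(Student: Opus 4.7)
My plan rests on the fact that the conditioning event $\{A_j(t)>x, X_t=j\}$ depends only on the initial sojourn at $j$ of the ongoing renewal cycle, whereas $\mathcal{J}_t$, via condition \eqref{key}, is essentially a functional of cycles completed before $t-\epsilon(t)$. The argument breaks into four steps: positivity of the conditioning event, truncation using \eqref{key}, a renewal-factorisation identity, and asymptotic decoupling.

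First, by the renewal-reward theorem together with the absolute continuity of $F_j$ from Assumption \ref{As0}(c),
\[
\lim_{t\to\infty}P\bigl[A_j(t)>x,\,X_t=j\bigr]=\frac{1}{E|I^j_1|}\int_x^{\infty}(1-F_j(u))\,du>0,
\]
since the event requires the initial sojourn at $j$ of the current cycle to exceed $x$. Combined with $\mathcal{J}_t-\mathcal{J}_{t-\epsilon(t)}\stackrel{P}{\to}0$, a Markov-type bound shows $P[|\mathcal{J}_t-\mathcal{J}_{t-\epsilon(t)}|>\delta \mid A_j(t)>x, X_t=j]\to 0$, so it is enough to establish
\[P\bigl[\mathcal{J}_{t-\epsilon(t)}\in A \,\big|\, A_j(t)>x,\,X_t=j\bigr]\to P[\mathcal{J}_\infty\in A].\]

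The central step is a renewal factorisation. On $\{g^j_t=n\}$ the event $\{X_t=j\}$ reduces to $\{W^{(n+1)}_j>t-\tau^j_n\}$, where $W^{(n+1)}_j\sim F_j$ is the initial sojourn at $j$ in cycle $n+1$, independent of $\mathcal{F}^X_{\tau^j_n}$; moreover $\mathcal{J}_{t-\epsilon(t)}$ is $\mathcal{F}^X_{\tau^j_n}$-measurable on this event since $g^j_{t-\epsilon(t)}\le n$. Conditioning on $\mathcal{F}^X_{\tau^j_n}$ and summing over $n\ge 0$ then gives, for any bounded measurable $f$,
\[
E\!\left[f(\mathcal{J}_{t-\epsilon(t)})\mathbf{1}_{\{A_j(t)>x,X_t=j\}}\right]=E\!\left[f(\mathcal{J}_{t-\epsilon(t)})H(t)\right],
\]
with $H(t):=\sum_{n\ge 0}\mathbf{1}\{\tau^j_n\le t-x\}(1-F_j(t-\tau^j_n))$; and similarly $P[A_j(t)>x, X_t=j]=E[H(t)]$. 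Hence the desired conditional expectation equals the ratio $E[f(\mathcal{J}_{t-\epsilon(t)})H(t)]/E[H(t)]$.

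Finally, I would establish asymptotic decoupling of $\mathcal{J}_{t-\epsilon(t)}$ and $H(t)$. Splitting $H(t)=H_1(t)+H_2(t)$ according to $\tau^j_n>t-\epsilon(t)$ or not, the key renewal theorem combined with $\sup_{i,j}m_{ij}<\infty$ yields $E[H_2(t)]\le\int_{\epsilon(t)}^\infty(1-F_j(u))\,du/E|I^j_1|\to 0$, so the $H_2$ part vanishes. On the event $\{g^j_{t-\epsilon(t)}<g^j_t\}$, which has probability tending to $1$ since $A_j(t)=O_P(1)$ by \eqref{eRes} while $\epsilon(t)\to\infty$, the quantity $H_1(t)$ is a functional of cycles $\mathcal{H}_i$ with index $i>g^j_{t-\epsilon(t)}$ anchored at the past-measurable time $\tau^j_{g^j_{t-\epsilon(t)}}$, whereas $\mathcal{J}_{t-\epsilon(t)}$ depends only on cycles of index $\le g^j_{t-\epsilon(t)}$; by the iid cycle structure and renewal mixing, $(\mathcal{J}_{t-\epsilon(t)}, H_1(t))$ converges jointly to a product law with marginal $\mathcal{L}(\mathcal{J}_\infty)$ (invoking also $\mathcal{J}_{t-\epsilon(t)}\stackrel{d}{\to}\mathcal{J}_\infty$ via Slutsky applied to \eqref{key}), yielding convergence of the ratio to $E[f(\mathcal{J}_\infty)]$. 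The main obstacle is precisely this decoupling: the renewal times $\tau^j_n$ inside $H_1(t)$ are cumulative sums that formally involve the old cycle lengths underlying $\mathcal{J}_{t-\epsilon(t)}$, so independence is not literal but needs to be produced by renewal mixing, equivalently by coupling with the stationary renewal process anchored at $\tau^j_{g^j_{t-\epsilon(t)}}$.
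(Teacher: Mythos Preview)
Your overall architecture---reduce to $\mathcal{J}_{t-\epsilon(t)}$ via the continuity set $A$, then decouple from the conditioning event---matches the paper's two-step structure, but the execution of the decoupling is genuinely different and, as you yourself flag, incomplete.

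Your renewal factorisation $E[f(\mathcal{J}_{t-\epsilon(t)})H(t)]/E[H(t)]$ is correct and elegant; the identification $E[H_2(t)]=P[A_j(t)\ge\epsilon(t),X_t=j]\to 0$ is also fine. The difficulty is exactly where you locate it: $H_1(t)$ depends on the renewal epochs $\tau^j_n$ with $n>g^j_{t-\epsilon(t)}$, and these are anchored at $\tau^j_{g^j_{t-\epsilon(t)}}$, which is $\mathcal{F}^X_{\tau^j_{g^j_{t-\epsilon(t)}}}$-measurable---the same $\sigma$-field that carries $\mathcal{J}_{t-\epsilon(t)}$. Asserting that ``renewal mixing'' yields joint convergence to a product law is not enough: you would need to show that the residual coupling through the age $A_j(t-\epsilon(t))$ becomes asymptotically independent of $\mathcal{J}_{t-\epsilon(t)}$, which is essentially a version of the lemma you are trying to prove. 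A rigorous completion along your lines would require an explicit coupling with a delayed (stationary) renewal process started at $t-\epsilon(t)$, and control of the coupling time relative to $\epsilon(t)$.

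The paper sidesteps this circularity by a Bayes flip: instead of decoupling $\mathcal{J}_{t-\epsilon(t)}$ from the conditioning event, it writes
\[
P[\mathcal{J}_{t-\epsilon(t)}\in A\mid \widetilde G^{(j)}_{t,x}]=P[\mathcal{J}_{t-\epsilon(t)}\in A]+\sum_{j'}P[\mathcal{J}_{t-\epsilon(t)}\in A,X_{t-\epsilon(t)}=j']\Bigl(\tfrac{P[\widetilde G^{(j)}_{t,x}\mid \mathcal{J}_{t-\epsilon(t)}\in A,X_{t-\epsilon(t)}=j']}{P[\widetilde G^{(j)}_{t,x}]}-1\Bigr),
\]
and shows the ratio $M_t\to 1$. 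The key device is the gap event $K_t=\{\tau^j_{g^j_{t-\epsilon(t)}+2}\le\tau^j_{g^j_t}\}$, which has $P[K_t]\to 1$ and inserts at least one complete i.i.d.\ cycle between the $\sigma$-field carrying $\mathcal{J}_{t-\epsilon(t)}$ and the cycle carrying $\{A_j(t)>x,X_t=j\}$. On $K_t$, conditional independence is then a direct application of Proposition~\ref{Psemi1}(c), with no mixing or coupling argument needed; a Pratt/DCT step handles the sum over $j'$. This buys exactly what your approach is missing: a clean, finite-cycle separation rather than an asymptotic mixing statement.
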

Proof of Lemma \ref{lem01} is given in the Appendix \ref{App}.

\subsection{Proof of Lemma \ref{L1}} 
We prove the assertion by showing the following steps, which are proved in following Subsections \ref{S1},\ref{S2} and \ref{S3} subsequently.
\begin{itemize}
\item \textbf{Step $1$:} Under Assumption \ref{As0} for any $t>0$ and $A=(A_{1},A_{2})\in\mathcal{B}(\R^{2})$,
\begin{align*}
P\big[(\Phi^{(\mu)}_{t},I^{(\mu,\lambda)}_{t}) \in A \mid X_{0}=i,X_{t}=j\big]=P\big[\big(Q^{(1)}_{t},Q^{(2)}_{t}\big)\in (A_{1},A_{2}) \mid X_0=i,X_t=j\big].
\end{align*}
where $(Q_{t}^{(1)},Q_{t}^{(2)})$ is defined in \eqref{joint}.
\item \textbf{Step $2$:}  Under Assumptions \ref{As0} and \eqref{condT1} for each $j\in S,$ as $t\to\infty,$ the process $L:=(L_{t}:t\ge 0)$ defined in \eqref{Rt} marginally satisfies $$L_{t}\stackrel{d}{\to}(0,V^{*}_j)$$  where the random variable $V^{*}_j$ is distributed identically as in \eqref{mixture}.
\item \textbf{Step $3$:} Under Assumption \ref{As0} and \eqref{condT1} the process $\big(L_{t}^{(2)}:t\ge 0\big)$ satisfies all properties of $(\mathcal{J}_{t})_{t\ge 0}$ in \eqref{key} of Lemma \ref{lem01}, with $\mathcal{J}_{\infty}=V_{j}^{*}$.
\end{itemize}

We sketch how proofs of the aforementioned steps will show the assertion in \eqref{mixture}. For any $A_{*}\in\mathcal{B}(\R_{\ge 0}\times \R^{}\times S)$ taking $t\to\infty$ limit on both sides of 
\beqn
P_{i}[(\Phi^{(\mu)}_{t},I^{(\mu,\lambda)}_{t},X_{t}) \in A_{*}] = \sum_{j\in S}^{}P[X_{t}=j \mid X_{0}=i]P\big[(\Phi^{(\mu)}_{t},I^{(\mu,\lambda)}_{t},j) \in A_{*} \mid X_{0}=i,X_{t}=j\big],\non
\eeqn
and using $P[X_{t}=j \mid X_{0}=i]\to \pi_{j}$ (which holds under Assumption \ref{As0}), main assertion will be a consequence of Dominated Convergence Theorem if we show that $$P\big[(\Phi^{(\mu)}_{t},I^{(\mu,\lambda)}_{t},X_{t}) \in A_{*} \mid X_{0}=i,X_{t}=j\big]\to P[(0,W_{j},j)\in A_{*}]\s\text{as} \,\, t\to\infty$$
where for each $j\in S,$ the random variable $W_{j}$ is defined in \eqref{mixture}. Since on conditioning with respect to $\{X_{t}=j\}$ last co-ordinate of $(\Phi^{(\mu)}_{t},I^{(\mu,\lambda)}_{t},X_{t})$ becomes deterministic, we proceed with $P\big[(\Phi^{(\mu)}_{t},I^{(\mu,\lambda)}_{t}) \in A \mid X_{0}=i,X_{t}=j\big]$ which has a distributional characterization from \textbf{Step $1$} through $(Q^{(1)}_{t},Q_{t}^{(2)})$. The assertion holds if  we show that as $t\to\infty,$
\beqn
P\Big[(Q^{(1)}_{t},Q^{(2)}_{t})\in A_{} \mid X_{0}=i,X_{t}=j\Big]\to P\Big[(0,W_j)\in A_{}\Big]
\quad A_{}\in \mathcal{B}(\R_{\ge 0}\times \R^{}). \label{toprove2}
\eeqn
Observe that 
\beqn
(Q^{(1)}_{t},Q_{t}^{(2)})=\Big(e^{ -\mu(j)(t-\tau_{g_{t}^{j}}^{j})}L_{t}^{(1)},G_{j}^{\mu,\lambda^{}}\big(t-\tau_{g_{t}^{j}}^{j}\big)+e^{ -\mu(j)(t-\tau_{g_{t}^{j}}^{j})}L_{t}^{(2)}\Big).\label{z_to_r}
\eeqn

The limit in \eqref{toprove2} holds if one can find $t\to\infty$ limit of $\mathcal{L}\Big((t-\tau_{g_{t}^{j}}^{j}), L_{t}\mid X_{0}=i,X_{t}=j\Big).$ From \textbf{Step $2$} one has the unconditional $t\to\infty$ weak limit for $L_{t}.$ As $L_{t}$ is a $\mathcal{F}^{X}_{\tau_{g^{j}_{t}}^{j}}$ measurable process with $L_{t}^{(1)}\stackrel{d}{\to} 0$ established in \textbf{Step $2$}, It suffices to find the limit of $$\mathcal{L}\Big((t-\tau_{g_{t}^{j}}^{j}), L^{(2)}_{t}\mid X_{0}=i,X_{t}=j\Big).$$

Observe that 
\begin{align}
&P\Big[t-\tau_{g_{t}^{j}}^{j}>x, L^{(2)}_{t}\in A\mid X_{0}=i,X_{t}=j\Big]\non\\&\s=P_{i}\big[L^{(2)}_{t}\in A\mid X_{t}=j, t-\tau_{g_{t}^{j}}^{j}>x\big] P_{i}\big[t-\tau_{g_{t}^{j}}^{j}>x\mid X_{t}=j\big].\label{prodt1}
\end{align}
 
Under Assumption \ref{As0}, 
\beqn
\lim_{t\to\infty}P_{i}\big[t-\tau_{g_{t}^{j}}^{j}>x\mid X_{t}=j\big]=\frac{\sum_{k\in S}P_{jk}\int_{x}^{\infty}(1-F_{jk}(y))dy}{m_{j}},\label{eBackward}
\eeqn
that is a consequence of \eqref{lim1} and the limit of the backward residual time $(t-\tau_{g_{t}^{j}}^{j})$ conditioned on $X_{t}=j$ under semi-Markovian setting. The proof of the limit of the backward residual time $(t-\tau_{g_{t}^{j}}^{j})$ follows by similar arguments used in proving the time limit of the forward residual time $\lim_{t\to\infty}P[\tau^{j}_{g_{t}^{j}+1} - t>x, Y_{t}=j]$ at display (8.5) in \cite{cinlar1969markov} and the limit is identical (we omit the proof here).

Observe that the $t\to\infty$ limiting law of $t-\tau_{g_{t}^{j}}^{j}$, conditioned on $\{X_{t}=j\}$ is same as the $\pi_j^*$ in \eqref{pi_j^*}. With the help of \textbf{Step $3$,}  Lemma \ref{lem01} is applied by setting $\mathcal{J}_{t}:=L_{t}^{(2)},\mathcal{J}_{\infty}:=V_{j}^{*},$ and we yield the limit of $P_{i}\big[L^{(2)}_{t}\in A\mid X_{t}=j, (t-\tau_{g_{t}^{j}}^{j})>x\big]$ in \eqref{prodt1}, and for any $A_{1}\in\mathcal{B}(\R), A_{2}\in\mathcal{B}(\R_{\ge 0}),$ this leads to

\[ P_{i}\big[\big(L_{t}^{(2)},(t-\tau_{g_{t}^{j}}^{j})\big)\in A_{1}\times A_{2}\mid X_{t}=j\big]\to P[V^{*}_{j}\in A_{1}] \pi_{j}^{*}(A_{2})\s \text{}\,\, \]

as $t\to\infty$  where $V^{*}_{j}$ is defined as the unconditional limit of $L^{(2)}_{t}$ in \textbf{Step $2$} and the measure $\pi_{j}^{*}(\cdot)$ is defined in \eqref{pi_j^*}. This way \eqref{toprove2} is established by setting the random variable $T_j$ (that is independent of the rest of the random variables) such that $\mathcal{L}(T_j)=\pi^{*}_{j},$ and observing  
\beqn
\mathcal{L}\Big(e^{ -\mu(j)(t-\tau_{g_{t}^{j}}^{j})}L_{t}^{(1)}, G_{j}^{\mu,\lambda^{}}\big(t-\tau_{g_{t}^{j}}^{j}\big)+e^{ -\mu(j)(t-\tau_{g_{t}^{j}}^{j})}L_{t}^{(2)}\mid X_{0}=i, X_{t}=j\Big)\non\\ \stackrel{w}{\to}\mathcal{L}\big(0, G_{j}^{\mu,\lambda^{}}(T_j)+e^{-\mu(j)T_j}V^{*}_{j}\big)
=\mathcal{L}(0,W_{j})\non
\eeqn
as $t\to\infty$ (where $W_{j}$ is defined in \eqref{mixture}). 

\hfill$\square$

\subsection{Proof of \textbf{Step $1$}} \label{S1}

Define $\Phi(s,t)$ as $e^{-\int_{s}^{t} \mu(X_r) \, dr}$ for any $0 < s < t$. On the set $\{ X_0 = i, X_t = j \} = \{ \omega \in \Omega : X_0(\omega) = i, X_t(\omega) = j \}$, we decompose integral expressions involving $(\Phi^{(\mu)}_t, I^{(\mu, \lambda)}_t)$ by partitioning the interval $[0,t)$ into disjoint intervals as
\[
[0, t) = [0, \tau_0^j) \cup \bigcup_{k=1}^{g_t^j} [\tau_{k-1}^j, \tau_k^j) \cup [\tau_{g_t^j}^j, t).
\]
One has the following representation, using the notations in \eqref{LKintro1}, \eqref{LKintro2}, and \eqref{Rt}:
\begin{align}
\begin{pmatrix}
\Phi^{(\mu)}_t \\
I^{(\mu, \lambda)}_t
\end{pmatrix}
&= \begin{pmatrix}
\Phi(0, \tau_0^j) \left( \prod_{i=1}^{g_t^j} \Phi(\tau_{i-1}^j, \tau_i^j) \right) \Phi(\tau_{g_t^j}^j, t) \\
\int_0^{\tau_0^j} \lambda(X_s) \Phi(s,t) \, ds + \sum_{k=1}^{g_t^j} \int_{\tau_{k-1}^j}^{\tau_k^j} \lambda(X_s) \Phi(s,t) \, ds + \int_{\tau_{g_t^j}^j}^{t} \lambda(X_s) \Phi(s,t) \, ds
\end{pmatrix} \nonumber \\
&= \begin{pmatrix}
C_0^j \left( \prod_{k=1}^{g_t^j} C_k^j \right) \Phi(\tau_{g_t^j}^j, t) \\
\Phi(\tau_{g_t^j}^j, t) \left[ D_0^j \prod_{k=1}^{g_t^j} C_k^j + \sum_{k=1}^{g_t^j} \left( \prod_{i=k}^{g_t^j-1} C_{i+1}^j \right) D_k^j \right] + \int_{\tau_{g_t^j}^j}^{t} \lambda(X_s) \Phi(s, t) \, ds
\end{pmatrix} \nonumber \\
&\stackrel{\{X_0 = i, X_t = j\}}{=} \begin{pmatrix}
C_0^j \left( \prod_{k=1}^{g_t^j} C_k^j \right) e^{-\mu(j)(t - \tau_{g_t^j}^j)} \\
e^{-\mu(j)(t - \tau_{g_t^j}^j)} \left[ \left( \prod_{k=1}^{g_t^j} C_k^j \right) D_0^j + \widetilde{P}_{g_t^j}^j \right] + G_j^{\mu, \lambda}(t - \tau_{g_t^j}^j)
\end{pmatrix}\label{}
\end{align}
where the last equality is a consequence of the fact that, on $\{X_0 = i, X_t = j\}$, the latent process $Y$ will always be in state $j$ during the interval $[\tau_{g_t^j}^j, t)$. This implies
\[
\int_{\tau_{g_t^j}^j}^{t} \lambda(X_s) \Phi(s, t) \, ds = \int_{\tau_{g_t^j}^j}^{t} \lambda(j) e^{-\mu(j)(t - s)} \, ds = \int_0^{t - \tau_{g_t^j}^j} \lambda(j) e^{-\mu(j)(t - r)} \, dr = G_j^{\mu, \lambda}(t - \tau_{g_t^j}^j).
\]

Observe that in \eqref{joint}, we have $P_{g_t^j}^j$, which is not the same as $\widetilde{P}_{g_t^j}^j$ in \eqref{PP}. Note that
\begin{align}
&\mathcal{L}\left( (C_0^j, D_0^j), (C_1^j, D_1^j), \ldots, (C_{g_t^j}^j, D_{g_t^j}^j), (t - \tau_{g_t^j}^j) \mid X_0 = i, X_t = j \right) \nonumber \\
&\quad = \mathcal{L}\left( (C_0^j, D_0^j), (C_{g_t^j}^j, D_{g_t^j}^j), \ldots, (C_1^j, D_1^j), (t - \tau_{g_t^j}^j) \mid X_0 = i, X_t = j \right). \label{invar}
\end{align}
This is due to the fact that \( g_{t}^{j} \) is the renewal time, which depends entirely on the total sum of the interval lengths \( [0, \tau_{0}^{j}) \cup_{k=1}^{g_{t}^{j}} [\tau_{k-1}^{j}, \tau_{k}^{j}) \). Reversing the order of the renewal intervals in \( \cup_{k=1}^{g_{t}^{j}} [\tau_{k-1}^{j}, \tau_{k}^{j}) \) by mapping each \( k \) to \( g_{t}^{j} - k + 1 \) for \( k = 1, \ldots, g_{t}^{j} \) leaves this total sum unchanged. Hence, the distributional equality holds on both sides of \eqref{invar}.

Since $P_{g_t^j}^j$ and $\widetilde{P}_{g_t^j}^j$ are outputs of identical functions of the random variables in the LHS and RHS of \eqref{invar}, this implies that
\begin{eqnarray}
\left( (C_0^j, D_0^j), P_{g_t^j}^j, (t - \tau_{g_t^j}^j) \right) \stackrel{\mathcal{L}(\cdot \mid X_0 = i, X_t = j)}{=} \left( (C_0^j, D_0^j), \widetilde{P}_{g_t^j}^j, (t - \tau_{g_t^j}^j) \right), \label{ePP}
\end{eqnarray}
are identical in distribution. Hence,
\begin{align}
&\mathcal{L}\left( C_0^j \left( \prod_{k=1}^{g_t^j} C_k^j \right) e^{-\mu(j)(t - \tau_{g_t^j}^j)}, \, e^{-\mu(j)(t - \tau_{g_t^j}^j)} \left[ \left( \prod_{k=1}^{g_t^j} C_k^j \right) D_0^j + \widetilde{P}_{g_t^j}^j \right] + G_j^{\mu, \lambda}(t - \tau_{g_t^j}^j) \mid X_0 = i, X_t = j \right)\nonumber\\
&\s\s\s\s\s= \mathcal{L}(Q_t^{(1)}, Q_t^{(2)} \mid X_0 = i, X_t = j),\nonumber
\end{align}
proving the assertion of \textbf{Step 1}.
\hfill$\square$

\subsection{Proof of \textbf{Step $2$}}\label{S2}

We prove this step by showing
\begin{itemize}
\item\textbf{Claim $1$:} $L_{t}^{(1)}$ and absolute value of first term of $L_{t}^{(2)}$ both will converge to $0$ in probability as $t\to\infty,$ 
\item \textbf{Claim $2$:} $P_{g_{t}^{j}}^{j}\stackrel{d}{\to}V_{j}^{*}$ in \eqref{mixture} (i.e second term of $L_{t}^{(2)}$ will converge to the solution of SRE $V_{j}^{*}$ in \eqref{mixture}),
\end{itemize}
and as a consequence of Slutsky's theorem assertion of this step holds. 
\subsubsection{Proof of \textbf{Claim $1$}}
To show the first claim, observe that $L_{t}^{(1)}$ and first term of $L_{t}^{(2)}$ are respectively 
\beqn
\bigg(\exp\Big\{t\Big(\frac{\sum_{i=1}^{g_{t}}\log C^{j}_{i}}{g_{t}}+\frac{\log C^{j}_{0}}{g_t}\Big)\frac{g_{t}}{t}\Big\},\exp\Big\{t\Big(\frac{\sum_{i=1}^{g_{t}}\log C^{j}_{i}}{g_{t}}+\frac{\log |D^{j}_{0}|}{g_t}\Big)\frac{g_{t}}{t}\Big\}\bigg).\non
\eeqn
A consequence of renewal theorem, law of large numbers, and Assumption \ref{As0} will suggest that 
\beqn
\frac{\sum_{i=1}^{g_{t}}\log C^{j}_{i}}{g_{t}}\stackrel{P}{\to} E_{\pi}\log C^{j}_{i}=-E|I_{j}|E_{\pi}\mu(\cdot)<0\s \text{and}\s\frac{g^{j}_{t}}{t}\stackrel{a.s}{\to}\frac{1}{E|I_{j}|}.\label{eRenewal}
\eeqn
Since $\log C_{0}^{j}, D_{0}^{j}$ are both $O_{_{P}}(1)$ (follows similar to the display (6.17) of Lemma 6.2 in \cite{lindskog2020exact} using \eqref{condT1}, and a proof of this is given in the supplementary copy of \cite{lindskog2020exact}), both of $\frac{\log C^{j}_{0}}{g^{j}_t}, \frac{\log D^{j}_{0}}{g^{j}_t}$ will go to $0$ in probability. This implies as $t\to\infty,$  $$\bigg(\bigg(\prod_{k=1}^{g_{t}^{j}}C^{j}_{k}\bigg)^{} C^{j}_{0},\bigg(\prod_{k=1}^{g_{t}^{j}}C^{j}_{k}\bigg)^{} D^{j}_{0}\bigg)\stackrel{P}{\to}(0,0).$$ 
\hfill$\square$

\subsubsection{Proof of \textbf{Claim $2$}}\label{lemPerp}
\begin{proof}
We show that \textbf{claim $2$} holds under assumptions of Theorem \ref{T1}, i.e under Assumptions \ref{As0} and \eqref{condT1}, $P_{g_{t}^{j}}^{j}\stackrel{d}{\to}V_{j}^{*}$ in \eqref{mixture} as $t\to\infty$. Define $P^{j}_{\infty}:=\sum_{k=1}^{\infty}\bigg(\prod_{i=1}^{k-1}C_{i}^{j}\bigg)D_{k}^{j},$ and we show that $P^{j}_{g_{t}^{j}}\stackrel{P}{\to}P^{j}_{\infty}$ and $P^{j}_{\infty}$ satisfies unique solution of SRE that is identical as $\mathcal{L}(V_{j}^{*}).$ Note that
\beqn
P^{j}_{\infty}=\sum_{k=1}^{\infty}\bigg(\prod_{i=1}^{k-1}C_{i}^{j}\bigg)D_{k}^{j}=D_{1}^{j}+C_{1}^{j}\Big(\sum_{k=2}^{\infty}\bigg(\prod_{i=2}^{k-1}C_{i}^{j}\bigg)D_{k}^{j}\Big).\non
\eeqn
Now since $(C_{1}^{j},D_{1}^{j})\perp \{(C_{i}^{j},D_{i}^{j}): i\ge 2\},$ it follows that the second term inside first bracket of RHS $\sum_{k=2}^{\infty}\bigg(\prod_{i=2}^{k-1}C_{i}^{j}\bigg)D_{k}^{j}\stackrel{d}{=} P^{j}_{\infty} $ and also $(C_{1}^{j},D_{1}^{j})\perp \sum_{k=2}^{\infty}\bigg(\prod_{i=2}^{k-1}C_{i}^{j}\bigg)D_{k}^{j},$ it follows that $P^{j}_{\infty}$ satisfies the SRE at \eqref{mixture}. Under \eqref{condT1}, we have $$E\log C_{1}^{j}=-E|I_{j}|E_{\pi}\mu(\cdot)<0,\s\text{ and }\s E\log^{+}D_{1}^{j}<\infty,$$ it follows that the solution to the SRE $X\stackrel{d}{=}D_{1}^{j}+C_{1}^{j}X$ is unique in distribution (Lemma 1.4(a) and Vervaat's Theorem 1.5 in \cite{vervaat1979stochastic}). So it follows that $\mathcal{L}(P_{\infty}^{j})=\mathcal{L}(V_{j}^{*})$ in \eqref{mixture}.

Observe that for any $t>0$,  
\beqn
P^{j}_{\infty}-P^{j}_{g_{t}^{j}}&\stackrel{}{=}& \Big(\prod_{i=1}^{g_{t}^{j}}C_{i}^{j}\Big)\bigg[\sum_{k=g_{t}^{j}+1}^{\infty}\Big(\prod_{i=g_{t}^{j}+1}^{k-1}C_{i}^{j}\Big)D^{j}_{k}\bigg]\non\\
&=& \exp\Big\{t\Big(\frac{\sum_{i=1}^{g_{t}}\log C^{j}_{i}}{g_{t}}\Big)\frac{g_{t}^{j}}{t}\Big\} \bigg[\sum_{k=g_{t}^{j}+1}^{\infty}\bigg(\prod_{i=g_{t}^{j}+1}^{k-1}C_{i}^{j}\Big)D^{j}_{k}\bigg].\non
\eeqn
As $t\to\infty,$ using \eqref{eRenewal} first term $\prod_{i=1}^{g_{t}^{j}}C_{i}^{j}\stackrel{P}{\to}0.$ To prove the assertion, we prove that the second term (in the RHS above) is $O_{P}(1).$ Observe that for any $t\ge 0,$
\beqn
\big((C^{j}_{g^{j}_{t}+1},D^{j}_{g^{j}_{t}+1}),(C^{j}_{g^{j}_{t}+2},D^{j}_{g^{j}_{t}+2}),\ldots \big)\stackrel{d}{=}\big((C^{j}_{1},D^{j}_{1}),(C^{j}_{2},D^{j}_{2}),\ldots\big)\label{s'2}
\eeqn
which follows by conditioning w.r.t $g^{j}_{t}$ and using argument similar to \eqref{eres2}. A consequence of \eqref{s'2} gives 
\beqn
P^{j}_{\infty}\stackrel{d}{=} \sum_{k=g_{t}^{j}+1}^{\infty}\bigg(\prod_{i=g_{t}^{j}+1}^{k-1}C_{i}^{j}\Big)D^{j}_{k}=D^{j}_{g_{t}+1}+C^{j}_{g_{t}+1}\big(D^{j}_{g_{t}+2}+C^{j}_{g_{t}+2}\big(\ldots\big)\big),\label{s2'}
\eeqn
and since distribution of $P^{j}_{\infty}$ uniquely exists under \eqref{condT1}, the random quantity $\sum_{k=g_{t}^{j}+1}^{\infty}\bigg(\prod_{i=g_{t}^{j}+1}^{k-1}C_{i}^{j}\Big)D^{j}_{k}
$ is $O_{_P}(1),$ proving our assertion. 
\hfill$\square$
\end{proof}

\subsection{Proof of \textbf{Step $3$}}\label{S3}

Since the first term of $L^{(2)}_{t}$ i.e $\big(\prod_{k=1}^{g_{t}^{j}}C^{j}_{k}\big)^{} D^{j}_{0}\stackrel{P}{\to}0$ as $t\to\infty.$ The assertion holds if we show \eqref{key} of Lemma \ref{lem01}  with $P_{g_{t}^{j}}^{j}$ as $\mathcal{J}_{t}.$ Note that from \textbf{Step $2$} it follows that unconditionally $P_{g_{t}^{j}}^{j}\stackrel{d}{\to}V^{*}_{j},$ so we can take $V^{*}_{j}$ as $\mathcal{J}_{\infty}$  in \eqref{key}. We are only left to prove $P_{g_{t}^{j}}^{j} - P_{g_{t-\epsilon(t)}^{j}}^{j}\stackrel{P}{\to} 0$ for some function $\epsilon(\cdot)$ such that $\epsilon(t)\to\infty$ and $\frac{\epsilon(t)}{t}\to 0$ as $t\to\infty.$

\beqn
P_{g_{t}^{j}}^{j}- P_{g_{t-\epsilon(t)}^{j}}^{j}&=&\sum_{k=1}^{g_{t}^{j}}\Big(\prod_{i=1}^{k-1}C_{i}^{j}\Big)D_{k}^{j}- \sum_{k=1}^{g_{t-\epsilon(t)}^{j}}\Big(\prod_{i=1}^{k-1}C_{i}^{j}\Big)D_{k}^{j}\non\\
&=&\bigg(\prod_{i=1}^{g_{t-\epsilon(t)}^{j}}C_{i}^{j}\bigg)\bigg[\sum_{k=g^{j}_{t-\epsilon(t)}+1}^{g^{j}_{t}}\bigg(\prod_{i=g_{t-\epsilon(t)}^{j}+1}^{k-1}C_{i}^{j}\bigg)D_{k}^{j}\bigg]\label{st3e1}
\eeqn
Observe that the second term of the product in \eqref{st3e1} can be bounded by 
\beqn
\sum_{k=g^{j}_{t-\epsilon(t)}+1}^{g^{j}_{t}}\bigg(\prod_{i=g_{t-\epsilon(t)}^{j}+1}^{k-1}C_{i}^{j}\bigg)D_{k}^{j}&\le& \sum_{k=g^{j}_{t-\epsilon(t)}+1}^{\infty}\bigg(\prod_{i=g_{t-\epsilon(t)}^{j}+1}^{k-1}C_{i}^{j}\bigg)D_{k}^{j}.\non\\&\stackrel{d}{=}&\sum_{k=1}^{\infty}\bigg(\prod_{i=1}^{k-1}C_{i}^{j}\bigg)D_{k}^{j}\non
\eeqn
using ideas similar to \eqref{s'2} and \eqref{s2'}) where the upper bound satisfies the unique solution of the SRE $$X=C_{1}^{j}X+D_{1}^{j},\s X\perp (C_{1}^{j},D_{1}^{j})$$
under conditions $E\log C_{k}^{j}=-E|I_{j}|E_{\pi}\mu(\cdot)<0,\,\, E\log^{+}D_{k}^{j}<\infty$  ensured by \eqref{condT1}. This implies that the second term of the product in \eqref{st3e1} is $O_{P}(1)$.

Observe that first term in the product of \eqref{st3e1} is $$\bigg(\prod_{i=1}^{g_{t-\epsilon(t)}^{j}}C_{i}^{j}\bigg)= \exp\Big\{(t-\epsilon(t))\Big(\frac{\sum_{i=1}^{g^{j}_{t-\epsilon(t)}}\log C^{j}_{i}}{g^{j}_{t-\epsilon(t)}}\Big)\frac{g_{t-\epsilon(t)}^{j}}{t-\epsilon(t)}\Big\}.$$
A consequence of Renewal Theorem and Law of large number leads to \eqref{eRenewal} with $t$ replaced by $t-\epsilon(t)$ and as $t-\epsilon(t) \to\infty,$ one has $\prod_{i=1}^{g_{t-\epsilon(t)}^{j}}C_{i}^{j}\stackrel{P}{\to}0$

\hfill$\square$

\section{Appendix}\label{App}
We prove the Proposition \ref{Psemi1} and Lemma \ref{lem01} here.

\subsection{Proof of Proposition \ref{Psemi1}}
\begin{proof}
 Observe that
\beqn
P[\tau_{g^{j}_{t}+2}^{j}-\tau_{g^{j}_{t}+1}^{j}>x]&=&\sum_{n=0}^{\infty}P[\tau_{g^{j}_{t}+2}^{j}-\tau_{g^{j}_{t}+1}^{j}>x\mid g^{n}_{t}=n]P[g^{n}_{t}=n]\non\\
&=&\sum_{n=0}^{\infty}P[\tau_{n+2}^{j}-\tau_{n+1}^{j}>x\mid g^{j}_{t}=n]P[g^{j}_{t}=n]\non\\
&\stackrel{(a)}{=}&\sum_{n=0}^{\infty}P[\tau_{1}^{j}-\tau_{0}^{j}>x]P[g^{j}_{t}=n]=P[\tau_{1}^{j}-\tau_{0}^{j}>x]\label{eres2}
\eeqn
where equality in (a) holds from the fact that the event $\{g_{t}^{j}=n\}=\{\tau_{n}^{j}\le t <\tau_{n+1}^{j}\}$ is independent with the event $\{\tau_{n+2}^{j}-\tau_{n+1}^{j}>x\}$  due to the regeneration property, and the latter is identical in probability with $\{\tau_{1}^{j}-\tau_{0}^{j}>x\}$. Similar to steps done in \eqref{eres2} one can show that any functional of $X$ in $\{s: s\ge \tau^{j}_{g_{t}^{j}+1}\}$ is independent of $g_{t}^{j}$ and identically distrubuted as that functional on $\{s:s\ge \tau_{0}^{j}\}$ for any $t\ge \tau_{0}^{j}$ (proving the assertion of part (ii)).

Proof of part (iii) begins by observing that $\{g^{j}_{t}=n, X_{t}=j'\}=\{\tau^{j}_{n}\le t <\tau^{j}_{n+1},X_{t}=j'\}.$ For any two sets $A,B$
\beqn
&&P[X^{(1)}_{t}\in A ,X^{(2)}_{t} \in B\mid K_{t},X_{t}=j']\non\\&\s\s=&\sum_{n=0}^{\infty}P\big[X^{(1)}_{t}\in A ,X^{(2)}_{t} \in B\mid K_{t}, X_{t}=j',g^{j}_{t}=n\big]P[g^{j}_{t}=n\mid K_{t},X_{t}=j']\non\\
&\s\s\stackrel{(c)}{=}& \sum_{n=0}^{\infty}P\big[X^{(1)}_{t}\in A\mid  K_{t},g^{j}_{t}=n, X_{t}=j'\big] P\big[X^{(2)}_{t} \in B\mid  K_{t}, g^{j}_{t}=n,  X_{t}=j'\big]\non\\&&\s\times P[g^{j}_{t}=n\mid  K_{t}, X_{t}=j']\non\\
&\s\s\stackrel{(d)}{=}& \sum_{n=0}^{\infty}P\big[X^{(1)}_{t}\in A\mid  K_{t}, g^{j}_{t}=n, X_{t}=j'\big] P\big[X^{(2)}_{t} \in B\mid  K_{t}\big]P[g^{j}_{t}=n\mid  K_{t}, X_{t}=j']\non\\
&\s\s=& P[X^{(1)}_{t}\in A\mid  K_{t}, X_{t}=j'\big]P\big[X^{(2)}_{t} \in B \mid K_{t}].\non
\eeqn
Equality in $(c)$ follows from the fact that conditioned on $\{K_{t},g^{j}_{t}=n,X_{t}=j'\}$ the processes $X^{(1)}_{t},X^{(2)}_t$ are respectively $\sigma\{\mathcal{H}_{i}:i\le n\}$ and $\sigma\{\mathcal{H}_{i}: i\ge n+2\}$ measurable which are independent sigma fields. Equality in $(d)$ follows as conditioned on $\{K_{t}\}$ and $\{g_{t}^{j}=n,X_{t}=j'\};$ $X_{t}^{(2)}$ is $\sigma\{\mathcal{H}_{i}:i\ge n+2\}$ measurable and independent with $\{g_{t}^{j}=n,X_{t}=j'\}=\{\tau_{n}^{j}\le t<\tau_{n+1}^{j},X_{t}=j'\}$ as it is $\sigma\{\mathcal{H}_{i}: i\le n+1\}$ measurable.
\hfill$\square$
\end{proof}

\subsection{Proof of Lemma \ref{lem01}}\label{lem2}
Proof of Lemma \ref{lem01} follows similarly by \textbf{Step $2$} and \textbf{Step $3$} of Lemma 6.1 of \cite{majumder2024long}. For completeness, we give a proof here.

\begin{proof}
We prove the assertion in the following two steps. 
\begin{itemize}
\item \textbf{Step $1$:} $\lim_{t\to\infty}P\Big[\mathcal{J}_{t-\epsilon(t)}\in A\,\mid\,t-\tau_{g_{t}^{j}}^{j}>x, X_{t}=j\Big]=P[\mathcal{J}_{\infty}\in A].$
\item \textbf{Step $2$:} $\lim_{t\to\infty}P\Big[\mathcal{J}_{t}\in A\,\mid\,t-\tau_{g_{t}^{j}}^{j}>x, X_{t}=j\Big]=\lim_{t\to\infty}P\Big[\mathcal{J}_{t-\epsilon(t)}\in A\,\mid\,t-\tau_{g_{t}^{j}}^{j}>x, X_{t}=j\Big]$ for any $A$ such that $P\big[\mathcal{J}_{\infty}\in\partial A\big]=0.$
\end{itemize}

\textbf{Step 1:} Denote the event $\{X_{t}=j,t-\tau^{j}_{g_{t}^{j}}>x\}$ by $\widetilde{G}^{(j)}_{t,x}.$ Observe that for any set $A\in\mathcal{B}(\R),$
\begin{eqnarray}
P\Big[\mathcal{J}_{t-\epsilon(t)}\in A\,\mid\,\widetilde{G}^{(j)}_{t,x}\Big] &=&\sum_{j'\in S}P\Big[\mathcal{J}_{t-\epsilon(t)}\in A\,\mid\, X_{t-\epsilon(t)}=j',\widetilde{G}^{(j)}_{t,x}\Big]P\Big[X_{t-\epsilon(t)}=j'\mid \widetilde{G}^{(j)}_{t,x}\Big]\non\\&=&\sum_{j'\in S}\frac{P\big[\widetilde{G}^{(j)}_{t,x}\mid X_{t-\epsilon(t)}=j', \mathcal{J}_{t-\epsilon(t)}\in A\big]}{P\big[\widetilde{G}^{(j)}_{t,x}\big]}P\big[\mathcal{J}_{t-\epsilon(t)}\in A, X_{t-\epsilon(t)}=j'\big]\non\\
&=& P[\mathcal{J}_{t-\epsilon(t)}\in A]+ \sum_{j'\in S}P\Big[\mathcal{J}_{t-\epsilon(t)}\in A,\, X_{t-\epsilon(t)}=j'\Big]\label{eqSt2.10}\\ &&\s\s\times\Bigg(\frac{P[\widetilde{G}^{(j)}_{t,x}\mid \mathcal{J}_{t-\epsilon(t)}\in A, X_{t-\epsilon(t)}=j' ]}{P[\widetilde{G}^{(j)}_{t,x}]}-1\Bigg).\non
\end{eqnarray}

Denote the second summation term in the RHS of \eqref{eqSt2.10} by $\mathcal{D}_{t}$ and the quantity in first bracket by $$M_{t}:=\frac{P[t-\tau_{g_{t}^{j}}^{j}>x, X_{t}=j\mid \mathcal{J}_{t-\epsilon(t)}\in A, X_{t-\epsilon(t)}=j' ]}{P[t-\tau_{g_{t}^{j}}^{j}>x, X_{t}=j]}.$$ In order to show $\mathcal{D}_{t}\to 0,$ we first prove that $\lim_{t\to\infty}M_{t}\to 1.$  

Denote the set $\{\tau_{g_{t-\epsilon(t)}^{j}+2}^{j}\le \tau_{g_{t}^{j}}^{j}\}$ by $K_{t}.$ Observe that 

$$K_{t}=\Big\{B_{j}(t-\epsilon(t))+(\tau_{g^{j}_{t-\epsilon(t)}+2}^{j}-\tau_{g^{j}_{t-\epsilon(t)}+1}^{j})+A_{j}(t)\le \epsilon(t)\Big\}.$$ 
It follows that $P[K_t] \to 1$ as $t \to \infty$ (since  $\epsilon(t) \to \infty$ and all the terms $B_{j}(t-\epsilon(t))$,$A_{j}(t)$ and $\tau_{g^{j}_{t-\epsilon(t)}+2}^{j}-\tau_{g^{j}_{t-\epsilon(t)}+1}^{j}$ are $O_P(1)$ using \eqref{eRes} and Proposition \ref{Psemi1}(a) ).

Denote the terms 
\beqn
\widetilde{a}_{t}&:=& P\big[K_{t}\mid X_{t-\epsilon(t)}=j', \mathcal{J}_{t-\epsilon(t)}\in A\big],\non\\
\widetilde{b}_{t}&:=& P\big[K_{t}^{c}, (t-\tau_{g_{t}^{j}}^{j})>x, X_{t}=j\mid \mathcal{J}_{t-\epsilon(t)}\in A, X_{t-\epsilon(t)}=j'\big]\non
\eeqn

 and it is easy to see  that as $t\to\infty,$ $$(\widetilde{a}_{t},\widetilde{b}_{t})\to (1,0).$$ Observe that the numerator of $M_{t}$
\begin{align}
&P\big[t-\tau_{g_{t}^{j}}^{j}>x, X_{t}=j\mid \mathcal{J}_{t-\epsilon(t)}\in A, X_{t-\epsilon(t)}=j'\big]\non\\&\s=\,\,\,\,\,\,\widetilde{a}_{t}\,\,P\big[t-\tau_{g_{t}^{j}}^{j}>x, X_{t}=j\mid K_{t}, \mathcal{J}_{t-\epsilon(t)}\in A, X_{t-\epsilon(t)}=j'\big]+\widetilde{b}_{t}\non\\&\s\stackrel{(a)}{=}\,\,\,\,\,\,\,\,\widetilde{a}_{t}\,\, P\big[t-\tau_{g_{t}^{j}}^{j}>x, X_{t}=j\mid  K_{t}\big] +\widetilde{b}_{t}\,\,  ,\label{eq_Mt1}
\end{align}
where equality in $(a)$ follows by observing that, conditioned on \( K_{t} \) and \( \{X_{t-\epsilon(t)}=j'\} \), the event \( \{t-\tau^{j}_{g_{t}^{j}}>x, Y_{t}=j\} \) is measurable with respect to \( \sigma\{\mathcal{H}_{i}: i \geq g_{t-\epsilon(t)}^{j}+2\} \), while \( \mathcal{J}_{t-\epsilon(t)} \) is measurable with respect to \( \sigma\{\mathcal{H}_{i}: i \leq g_{t-\epsilon(t)}^{j}\} \). Therefore, by applying Proposition \ref{Psemi1}(c), we can omit \( \{\mathcal{J}_{t-\epsilon(t)} \in A\} \) from the conditioning part. Moreover, we can remove \( \{X_{t-\epsilon(t)}=j'\} \) from the conditioning part as well,  since it is measurable with respect to \( \mathcal{F}^{X}_{\tau_{g^{j}_{t-\epsilon(t)}+1}^{j}} \), which is independent of \( \sigma\{\mathcal{H}_{i}: i \geq g_{t-\epsilon(t)}^{j}+2\} \), under which \( \{t-\tau^{j}_{g_{t}^{j}}>x, X_{t}=j\} \) is measurable when conditioned on \( K_{t} \).

 Since $P[K_{t}]\to 1\,\, \&\,\, (\widetilde{a}_{t},\widetilde{b}_{t})\to (1,0)$, both the numerator and denominator of $M_{t}$ will have an identical time limit, which implies that $\lim_{t\to\infty}M_{t}\to 1.$

Using dominated convergence theorem (DCT) type argument, now we will prove that $D_{t}\to0$ as $t\to\infty.$

Denote $\frac{\mu_{j}\sum_{k\in S}P_{jk}\int_{x}^{\infty}(1-F_{jk}(y))dy}{\sum_{k\in S}\mu_{k}m_{k}}$ by $\pi_{j,x},$ which is the limit of $P\big[t-\tau_{g_{t}^{j}}^{j}>x, X_{t}=j\big]$ (follows from \eqref{eBackward}).

 For a fixed $j\in S, x\ge 0,$ and any $\delta_{x} \in (0,\pi_{j,x})$  there exists $t_{\delta_{x}}>0$ such that 
\beqn
\Big|P\big[t-\tau_{g_{t}^{j}}^{j}>x, X_{t}=j\big]-\pi_{j,x}\Big|\leq \delta_{x},\s t\ge t_{\delta_{x}}.\label{estep2}
\eeqn
 Observe that each term inside the sum of the second term of the RHS of \eqref{eqSt2.10} converges to $0$ as $t\to\infty$. Also note that, for $t\ge t_{\delta_{x}}$, each summand has an upper bound
\begin{align*}
P\big[\mathcal{J}_{t-\varepsilon(t)}\in A, X_{t-\varepsilon(t)}=j'\big]\big|M_{t}-1\big| 
&\le P\big[X_{t-\varepsilon(t)}=j'\big]\frac{2}{P\big[t-\tau_{g_{t}^{j}}^{j}>x, X_{t}=j\big]} \\
&\le P\big[X_{t-\varepsilon(t)}=j'\big]\frac{2}{\pi_{j,x}-\delta_{x}}
\end{align*} 
and that the upper bound is summable over $j'\in S$ and has a limit as $t\to\infty$ which is also summable over $j'\in S$. Hence, by Pratt's lemma (p.~101 in Schilling \cite{Schilling05}), the second term in the RHS of \eqref{eqSt2.10} converges to $D_{t}\to0$ as $t\to\infty$.

From the assumption \( \frac{\epsilon(t)}{t} \to 0 \), the assertion \( \mathcal{J}_{t-\epsilon(t)} \stackrel{d}{\to} \mathcal{J}_{\infty} \) follows, which shows that the first term in \eqref{eqSt2.10} converges to \( P[\mathcal{J}_{\infty} \in A] \), as asserted.

\textbf{Step 2:} Take any $\delta>0$ and define $A^{\delta}:=\{x: d(x,A)<\delta\},A^{-\delta}:=\{x\in A: d(x,A^c)> \delta\}$ and note that $A^{-\delta}\subseteq A\subseteq A^{\delta}$. Denoting $B_{t}:=\mathcal{J}_{t}-\mathcal{J}_{t-\epsilon(t)}$ one has 
\begin{eqnarray}
P\big[\mathcal{J}_{t}\in A \,\mid\,t-\tau_{g_{t}^{j}}^{j}>x, X_{t}=j\big]=P\big[\mathcal{J}_{t-\epsilon(t)}+B_{t}\in A, |B_{t}|\le \delta \,\mid\,t-\tau_{g_{t}^{j}}^{j}>x, X_{t}=j\big]+C_{t}\non\label{cond0.03}
\end{eqnarray}
where $$ \limsup_{t\to\infty}C_{t}\le \limsup_{t\to\infty}\frac{P[|B_{t}|>\delta]}{P\big[t-\tau_{g_{t}^{j}}^{j}>x, X_{t}=j\big]}=0.$$  
Observe that
\beqn
P\big[\mathcal{J}_{t-\epsilon(t)}\in A^{-\delta}\mid \,\,t-\tau_{g_{t}^{j}}^{j}>x, X_{t}=j\big]&\le& P\big[\mathcal{J}_{t-\epsilon(t)}+B_{t}\in A, |B_{t}|\le \delta\mid \,\,t-\tau_{g_{t}^{j}}^{j}>x, X_{t}=j\big]\non\\
&\le& P\big[\mathcal{J}_{t-\epsilon(t)}\in A^{\delta}\mid \,\,t-\tau_{g_{t}^{j}}^{j}>x, X_{t}=j\big].\non
\eeqn
Step $2$ follows by letting $\delta\to 0$ after taking $t\to\infty$ to both sides of the above step and applying the result obtained from Step 1 (using continuity set assumption of $A$ i.e $P[\mathcal{J}_{\infty}\in \partial A]=0$). 
\hfill$\square$
\end{proof}


\begin{thebibliography}{10}
\bibitem{anderson2016functional}
\textsc{Anderson, D. and Blom, J. and Mandjes, M. and Thorsdottir, H. and De Turck, K.}(2016)
 A functional central limit theorem for a Markov-modulated infinite-server queue.\textit{Methodology and Computing in Applied Probability}.
\textbf{18} 153--168.

\bibitem{asmussen2008applied}
\textsc{Asmussen, S.} (2008)
\textit{Applied probability and queues}, 2nd ed.
Springer Science \& Business Media (Volume 51).


\bibitem{baykal2009modeling}
\textsc{Baykal-G{\"u}rsoy, M. and Xiao, W. and Ozbay, K.}(2009)
  Modeling traffic flow interrupted by incidents.\textit{European Journal of Operational Research}.
\textbf{195(1)} 127--138.

\bibitem{baykal2009modeling2}
\textsc{Baykal-Gürsoy, M. and Xiao, W.}(2004)
  Stochastic decomposition in queues with Markov modulated service rates.\textit{Queueing Systems}.
\textbf{48} 75--88.


\bibitem{berbee1987chains}
\textsc{Berbee, H.}(1987)
 Chains with infinite connections: Uniqueness and Markov representation.\textit{ Probability theory and related fields}.
\textbf{76(2)} 243--253.

\bibitem{blom2014markov}
\textsc{Blom, J. and Kella, O. and Mandjes, M. and Thorsdottir, H.}(2014)
 Markov-modulated infinite-server queues with general service times
\textit{Queueing Systems}.
\textbf{76(4)} 403--424.

\bibitem{buraczewski2016stochastic}
\textsc{Buraczewski, D., Damek, E., and Mikosch, T.} (2016).
\textit{Stochastic Models with Power-Law Tails}.
Springer.


\bibitem{cappelletti2021dynamics}
\textsc{Cappelletti, D. and Pal Majumder, A. and Wiuf, C.}(2021)
The dynamics of stochastic mono-molecular reaction systems in stochastic environments.
\textit{Stochastic Processes and their Applications}, \textbf{137}  106--148.


\bibitem{cinlar1969markov}
\textsc{Cinlar, E.} (1969)
Markov renewal theory. \textit{Advances in Applied Probability}.
\textbf{1(2)} 123--187.

\bibitem{ccinlar1967queues}
\textsc{Cinlar, E.} (1967)
Queues with semi-Markovian arrivals. \textit{Journal of Applied Probability}.
\textbf{} 365 -- 379.

\bibitem{ccinlar1969semi}
\textsc{Cinlar, E.} (1969)
On semi-Markov processes on arbitrary spaces. \textit{Mathematical Proceedings of the Cambridge Philosophical Society}.
\textbf{66(2)} 381--392.

\bibitem{d2008m}
\textsc{D’Auria, B. }(2008)
  $M/M/\infty$ queues in semi-Markovian random environment.\textit{Queueing Systems}.
\textbf{58} 221--237.



\bibitem{erhardsson2014conditions}
\textsc{Erhardsson, T.}(2014)
Conditions for convergence of random coefficient \textbf{AR}(1)  processes and perpetuities in higher dimensions.
\textit{Bernoulli}, \textbf{20(2)} 990--1005.



\bibitem{graham2021regenerative}
\textsc{Graham, C.}(2021).
Regenerative properties of the linear Hawkes process with unbounded memory.
\textit{The Annals of Applied Probability}, \textbf{31(6)}  2844--2863. 

\bibitem{gut2009stopped}
\textsc{Gut, A. (2009).} \textit{Stopped random walks.} Springer.



\bibitem{Jansen2016ldp}
\textsc{Jansen, H. and Mandjes, M. and De Turck, K. and Wittevrongel, S.}(2016)
 A large deviation principle for infinite-server queues in a random environment.\textit{Queueing Systems}.
\textbf{82} 199--235.

\bibitem{lindskog2020exact}
\textsc{Lindskog, F. and Majumder, A.P.}(2020).
 Exact long time behavior of some regime switching stochastic processes.
\textit{ Bernoulli}, \textbf{26(4)}2572--2604.



\bibitem{Malhotra2001traffic}
\textsc{Malhotra, R., Dey, D., Doorn, E.A.V., Koonen, A.M.J.}(2001)
 Traffic modeling in a reconfigurable broadband nomadic computing environment.\textit{Internet Qual. Perform. Control Netw. Syst.}.
\textbf{47(4)} 255–267.

\bibitem{majumder2024long}
\textsc{Majumder, A.P.}(2024)
Long time behavior of semi-Markov modulated perpetuity and some related processes.
\textit{arXiv preprint arXiv:2410.15824}


\bibitem{cinneide1986dynamics}
\textsc{O'cinneide, C.A. and Purdue, P.}(1986)
The $M/M/\infty$ queue in a random environment.
\textit{Journal of Applied Probability}, \textbf{23(1)}  175--184.

\bibitem{Palomo2023flatten}
\textsc{Palomo, S. and Pender, J. and Massey, W.A. and Hampshire, R.C.}(2023)
 Flattening the curve: insights from queueing theory.\textit{PLoS ONE}.
\textbf{18(6)} e0286501.



\bibitem{resnick1992adventures}
\textsc{Resnick, S I.} (1992).
\textit{Adventures in stochastic processes.}  Springer Science $\&$ Business Media.

\bibitem{robert2013stochastic}
\textsc{Robert, Philippe} (2013). \textit{Stochastic networks and queues}.
Springer Science \& Business Media (Volume 52).


\bibitem{Schilling05}
\textsc{Schilling, R L.} (2005).
\textit{Measures, Integrals and Martingales}. Cambridge University Press.


\bibitem{Sonenberg2024prison}
\textsc{Sonenberg, N., Volodina, V., Challenor, P. G., and Smith, J. Q.}(2024)
  Using infinite server queues with partial information for occupancy prediction.\textit{Journal of the Operational Research Society}.
\textbf{75(2)} 262-277.


\bibitem{vervaat1979stochastic}
\textsc{Vervaat, W.} (1979).
On a stochastic difference equation and a representation of non--negative infinitely divisible random variables.
\textit{Advances in Applied Probability} {\bf 11(4)} 750--783.


\bibitem{worthington2020infinite}
\textsc{Worthington, D., Utley, M. and Suen, D.}(2020)
  Infinite-server queueing models of demand in healthcare: A review of applications and ideas for further work.\textit{Journal of the Operational Research Society}.
\textbf{71(8)} 1145--1160.
\end{thebibliography}
\end{document}